\numberwithin{equation}{section}
\newtheorem{thm}{Theorem}[section]
\newtheorem{lem}[thm]{Lemma}
\newtheorem{prop}[thm]{Proposition}
\newcounter{other}            
\newtheorem{otherth}[other]{Theorem}              
\newtheorem{otherl}[other]{ Lemma}        
\newcommand{\aut}{{\rm Aut}(\D)}
\def\D{\mathbb{D}}
\def\T{\partial \D}
\def\C{\mathbb{C}}
\def\Q{\mathcal{Q}}
\def\B {\mathcal B}
\def \f{\frac}
\def \ind{ \int_\D }
\begin{document}

\title [A family of M\"obius invariant function spaces]
{A Carleson type measure and a family of M\"obius invariant function spaces}
\author{Guanlong Bao and Fangqin Ye}
\address{Guanlong Bao\\
    Shantou University\\
    Shantou, Guangdong 515063, China}
\email{glbao@stu.edu.cn}

\address{Fangqin Ye\\
   Shantou University\\
    Shantou 515063, Guangdong, China}
\email{fqye@stu.edu.cn}

\thanks{ The work was supported by NNSF of China (No. 12001352 and No. 12271328) and  Guangdong basic and applied basic research foundation (No. 2022A1515012117).}
\subjclass[2010]{30H25;  30J10; 34C10; 47G10}
\keywords{$F(p, p-2, s)$ space,  Carleson type measure, Blaschke product, Volterra type operator,  complex differential equation}
\begin{abstract}
For $0<s<1$, let  $\{z_n\}$ be a sequence in the open unit disk such that $\sum_n (1-|z_n|^2)^s \delta_{z_n}$ is an $s$-Carleson measure.
 In this paper,  we consider the connections between this $s$-Carleson measure  and the theory of M\"obius invariant $F(p, p-2, s)$ spaces  by  the Volterra type operator, the reciprocal of  a  Blaschke product,  and second order complex differential equations having a prescribed zero sequence.
\end{abstract}

\maketitle

\section{Introduction}

 Let $\D$ be the open unit disk in the complex plane $\C$. A function $\varphi$ is said to be a M\"obius map if $\varphi$  is a  one-to-one analytic function that maps $\D$ onto itself. M\"obius maps  form a group with respect to
 the composition of mappings. This group is said to be   the M\"obius  group and denoted by $\aut$. For $a\in \D$, the following special M\"obius map defined by
 $$
 \sigma_a(z)=\frac{a-z}{1-\overline{a}z}, \ \ z\in \D,
 $$
 exchanges points $0$ and $a$. It is well known that every $\varphi$ in $\aut$  can be represented as $ \varphi=e^{i\theta} \sigma_a$ for some real number $\theta$ and some  $a$ in $\D$.   Clearly,   the M\"obius  group $\aut$ is homeomorphic to $\D\times \T$.

 A classical topic in complex analysis  is to investigate  the theory of  M\"obius invariant function spaces.  Let $H(\D)$ be the space of functions analytic in $\D$.  A space  $X$ contained in  $H(\D)$ is a M\"obius invariant function space if it is equipped with a semi-norm $\rho$ such that   $f\circ\varphi\in X$ and
$\rho(f\circ\varphi)\lesssim\rho(f)$ for all $f\in X$ and all $\varphi\in\aut$. In this case,  it is known  that there is  another semi-norm $\rho'$ on $X$ satisfying  that $\rho$ and $\rho'$ are equivalent,  and
$\rho'(f\circ\varphi)=\rho'(f)$ for all $f\in X$ and $\varphi\in\aut$. See \cite{AFP} for the theory on  M\"obius invariant function spaces.

In this paper, we consider  M\"obius invariant function spaces $F(p, p-2, s)$. For $0<p<\infty$ and  $0< s<\infty$, the space $F(p, p-2, s)$  is the set of  functions $f\in H(\D)$ for which
$$
\|f\|_{F(p, p-2, s)}^p=\sup_{a\in \D}\int_\D |f'(z)|^p(1-|z|^2)^{p-2}(1-|\sigma_a(z)|^2)^s dm(z)<\infty,
$$
where $dm(z)=1/\pi dxdy$,  $z=x+iy$.  It is known that  if $p+s\leq 1$, then $F(p, p-2, s)$ is trivial; that is, $F(p, p-2, s)$  consists only  of constant functions.  Since
$$
\|f\circ  \varphi-f(\varphi(0))\|_{F(p, p-2, s)}=\|f\|_{F(p, p-2, s)}
$$
for every $f\in F(p, p-2, s)$ and $\varphi \in \text{Aut}(\D)$, $F(p, p-2, s)$ is  M\"obius invariant. For $p=2$ and $s=1$, $F(p, p-2, s)$ is equal to  $BMOA$, the well-known  space of analytic functions in the Hardy space $H^1$ whose boundary values having bounded mean oscillation on $\T$ (cf. \cite{Bae, Gir}). For $p=2$, $F(p, p-2, s)$ is the well-studied space $\Q_s$ (cf. \cite{AXZ, Xi1, Xi2}). For $s>1$, all  $F(p, p-2, s)$ spaces are the same and equal to the Bloch space $\B$. Recall that $\B$ is the space of functions $f\in H(\D)$ such that
 $$
 \sup_{z\in \D}(1-|z|^2)|f'(z)|<\infty.
 $$
 $F(p, p-2, s)$ spaces are special cases of a general family of analytic function spaces $F(p, q, s)$ that were introduced by R. Zhao \cite{Zhao} and further investigated  in \cite{Rat}.  See \cite{Zhao2} for a recent survey on  $F(p, q, s)$ spaces.  We also  refer to
\cite{BP, BWY, QY} for some recent investigations on  $F(p, p-2, s)$ spaces.

Carleson type measures are key tools for the  modern function theory and operator theory. For  an arc $I$ of  $\T$ with arclength $|I|$,  the
 Carleson box  $S(I)$ is given by
$$
S(I)=\left\{r\zeta \in \D: 1-\frac{|I|}{2\pi}<r<1, \ \zeta\in I\right\}.
$$
  For $0<s<\infty$,  a nonnegative Borel measure $\mu$ on $\D$ is said to be an  $s$-Carleson measure  if $\mu(S(I))\lesssim |I|^s$ for all $I\subseteq\T$. When $s=1$, we obtain the classical Carleson measure
  that was introduced by L. Carleson \cite{C1, C2} to solve the problem of interpolation by functions in $H^\infty$ and  the corona problem. Here $H^\infty$ is the space of bounded analytic functions in $\D$ and we will write
  $\|f\|_\infty=\sup_{z\in \D}|f(z)|$ for $f\in H^\infty$.  It is also well known (cf. \cite{Xi1}) that $\mu$ is an  $s$-Carleson measure if and only if
  \begin{equation}\label{SCM}
  \sup_{a\in \D} \ind \left(\frac{1-|a|^2}{|1-\overline{a}z|^2}\right)^sd\mu(z)<\infty.
  \end{equation}

  For   $X\subseteq H^\infty$ and a sequence $\{z_n\}$ in $\D$,   $\{z_n\}$  is  called an interpolating sequence for $X$ if for every
bounded sequence $\{\zeta_n\}$ of complex numbers, there is   $f\in X$ with  $f(z_n) = \zeta_n$
for each   $n$.
By    L. Carleson \cite{C1},  $\{z_{n}\}$  is an interpolating sequence for $H^{\infty}$ if and only if $\{z_{n}\}$ is uniformly separated; that is, there exists a positive constant $\gamma$ satisfying
\begin{equation}\label{uniformlyseparatedd}
\inf_{m}\prod_{n\neq m} \left|\frac{z_m-z_n}{1-\overline{z_m}z_n}\right|\geq \gamma.
\end{equation}
A sequence $\{z_n\}$ in $\D$ is said to be  separated if
$\inf_{n \not=k}\rho(z_n, z_k)>0$,
where $\rho(z_n, z_k)=|\sigma_{z_n}(z_k)|$ is the pseudo-hyperbolic metric between $z_n$ and $z_k$.  For $0<r<1$ and $a\in \D$, denote by
$
\Delta(a, r)=\{z\in \D:  |\rho(a, z)|<r\}
$
the  pseudo-hyperbolic disk of center $a$ and radius $r$.  It is well known (cf. \cite{Ga}) that $\{z_n\}$ in $\D$ is uniformly separated if and only if
$\{z_n\}$ is separated and $\sum_n (1-|z_n|)\delta_{z_n}$ is a Carleson measure, where  $\delta_{z_n}$ is    the unit point-mass measure at $z_n$.  For a sequence $\{z_n\}$ in $\D$, it is known (cf. \cite{MS}) that
$\sum_n (1-|z_n|)\delta_{z_n}$ is a Carleson measure if and only if $\{z_n\}$ is finite unions of interpolation sequences for $H^\infty$. See \cite{DS, Ni, No} for the study of  $\{z_n\}$ in $\D$ satisfying that $\sum_n (1-|z_n|)\delta_{z_n}$ is a Carleson measure.

For $0<s<1$, let $\{z_n\}$ be  a sequence in $\D$  such that $\sum_n (1-|z_n|^2)^s \delta_{z_n}$ is an $s$-Carleson measure. This kind of Carleson type measures defined by sequences is very useful in the study of some theory of
$F(p, p-2, s)$ spaces.  F. P\'erez-Gonz\'alez and J. R\"atty\"a \cite{PR} used them to characterize inner functions in some $F(p, p-2, s)$ spaces. G. Bao and J. Pau \cite{BP} constructed some examples
based on these Carleson type measures to show that the multiplier from some $F(p, p-2, s)$ to another  is  only the zero function.  Ch. Yuan and C. Tong \cite{YT},  and R. Qian and F. Ye \cite{QY} used these  Carleson type measures to describe interpolating sequences for $H^\infty \cap F(p, p-2, s)$ with certain ranges of parameters $p$ and $s$. These  Carleson type measures are also used to investigate  solutions of second order complex differential equations having prescribed zeros (cf. \cite{Gr, Ye}). Recently, in \cite{BWY} the authors applied  these Carleson type measures  to construct Blaschke products for considering the proper inclusion relation associated with intersections and unions of some $F(p, p-2, s)$  spaces.

The aim of this paper is to consider  further the connections  between  the   $F(p, p-2, s)$  theory   and   sequence   $\{z_n\}$  in $\D$  such that $\sum_n (1-|z_n|^2)^s \delta_{z_n}$ is an $s$-Carleson measure. Our results involve  the  Volterra type operator from $H^\infty$ to $F(p, p-2, s)$, the  reciprocal of  a related Blaschke product, and solutions of second order complex differential equations having prescribed zeros $\{z_n\}$.

Throughout this paper,  we  write $a\lesssim b$ if there exists a positive constant $C$ such that $a\leq Cb$.
 The symbol $a\thickapprox b$ means  $a\lesssim b\lesssim a$.

\section{Volterra type operators from $H^\infty$ to $F(p, p-2, s)$}

In this section,  we give a  relation between  the  Volterra type operator  from $H^\infty$ to $F(p, p-2, s)$, and   $\{a_k\}$  in $\D$  satisfying  that $\sum_{k=1}^\infty  (1-|a_k|^2)^s \delta_{a_k}$ is an $s$-Carleson measure.  Our proof also yields that the range of the Ces\`aro operator acting on $H^\infty$ is contained in every non-trivial $F(p, p-2, s)$ space, which strengthens  some  previous results  from the literature.

The Volterra type operator $J_g$ with symbol $g\in H(\D)$ defined on $H(\D)$ by
$$
J_g f(z)=\int_0^z f(w)g'(w)dw, \ \ z\in \D, \ \ f\in H(\D).
$$
For the companion operator
$$
I_g f(z)=\int_0^z f'(w)g(w)dw, \ \ z\in \D, \ \ f\in H(\D),
$$
and the multiplier operator
$$
M_g f(z)=g(z)f(z), \ \ z\in \D, \ \ f\in H(\D),
$$
it is clear that
\begin{equation}\label{20}
M_g f(z)=f(0)g(0)+J_g f(z)+I_g f(z).
\end{equation}
Ch. Pommerenke \cite{P} first studied  $J_g$ and  proved that  $J_g$ is a bounded operator on the Hardy space $H^2$ if and only if $g\in BMOA$. Later, the operator  $J_g$ was  studied systematically in \cite{AS}. Operators $J_g$ and $I_g$ have attracted a lot of interest. Here we mention the paper \cite{PZ} studying  these operators related to $F(p, q, s)$ spaces.

An inner function $f$ is an $H^\infty$ function whose boundary values satisfy $|f(e^{i\theta})|=1$ almost everywhere.
A sequence $\{a_k\}$  in $\D$ is called  a Blaschke sequence if
$$
\sum_k (1-|a_k|)<\infty.
$$
This condition  ensures  the convergence of the  Blaschke product:
$$
B(z)=\prod_{k=1}^\infty \f{|a_k|}{a_k}\f{a_k-z}{1-\overline{a_k}z}.
$$
All   Blaschke products are  inner functions. If the zero sequence of a Blaschke product $B$ is uniformly separated, then $B$ is said to be an
interpolating Blaschke product.  A Blaschke product is called a Carleson-Newman Blaschke product if it can
be expressed as a product of finitely many interpolating Blaschke products. Recall  that a Blaschke product associated with  a sequence
$\{a_k\}$ in $\D$ is a Carleson-Newman Blaschke product if and only if $\sum_k (1-|a_k|^2)\delta_{a_k}$ is a 1-Carleson measure.

The following result  is Theorem 1.4 in \cite{PR} characterizing inner functions in some $F(p, p-2, s)$ spaces.

\begin{otherth} \label{1F-Inner}
Let $0<s<1$ and $p>\max\{s, 1-s\}$. Then an inner function belongs to $F(p, p-2, s)$ if and only if it is a Blaschke product associated with a sequence $\{z_k\}_{k=1}^\infty$ in $\D$ which satisfies that
$\sum_k (1-|z_k|)^s \delta_{z_k}$ is an $s$-Carleson measure.
\end{otherth}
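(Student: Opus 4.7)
The plan is to prove the two directions separately, with the Schwarz--Pick inequality $(1-|z|^2)|\Theta'(z)|\leq 1-|\Theta(z)|^2$ for inner $\Theta$, coupled with appropriate pointwise estimates for Blaschke products, serving as the common backbone.

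For sufficiency, suppose $B$ is the Blaschke product with zero sequence $\{z_k\}$ for which $\sum_k(1-|z_k|^2)^s\delta_{z_k}$ is an $s$-Carleson measure. I would bound the integrand in $\|B\|_{F(p,p-2,s)}^p$ by an expression involving either $1-|B(z)|^2$ or $-\log|B(z)|$ (using Schwarz--Pick to replace $|B'(z)|^p(1-|z|^2)^{p-2}$ by something of the form $(1-|B(z)|^2)^{p-1}|B'(z)|/(1-|z|^2)$ when $p\geq 1$, and a similar maneuver for $p<1$), then invoke the pointwise estimate
$$-\log|B(z)|\leq C\sum_k \frac{(1-|z_k|^2)(1-|z|^2)}{|1-\overline{z_k}z|^2}$$
valid where $|B(z)|$ is bounded away from $0$. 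On the complementary set, which is contained in a controlled union of pseudo-hyperbolic disks about the zeros, one argues separately using the known $s$-Carleson mass of such disks. After Fubini and the standard integral estimates
$$\int_\D \frac{(1-|z|^2)^c}{|1-\overline{b}z|^{2+d}}\,dm(z)\lesssim (1-|b|^2)^{-d}\qquad(c>d-1),$$
the integral in $\|B\|_{F(p,p-2,s)}^p$ reduces to a sum controlled by the $s$-Carleson characterization (\ref{SCM}) applied to $\sum_k(1-|z_k|^2)^s\delta_{z_k}$. The hypothesis $p>\max\{s,1-s\}$ should turn out to be precisely what makes the exponents in these integral estimates land in the convergent range.

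For necessity, assume $\Theta\in F(p,p-2,s)$ is inner and factor $\Theta=B\cdot S_\mu$ with $B$ a Blaschke product and $S_\mu$ a singular inner function. I would first show that $S_\mu$ must be trivial: for a nontrivial singular inner function the derivative grows too fast when $z$ approaches points in the support of $\mu$, and this growth---precisely because $s<1$---is inconsistent with finiteness of $\|S_\mu\|_{F(p,p-2,s)}$ (one tests by taking $a$ tending to a point in $\mathrm{supp}\,\mu$). Then, with $\Theta=B$, to recover the Carleson condition I would extract a sufficiently sparse subsequence of the zeros (a finite-union argument handles the general case) so that $|B'(z)|\gtrsim(1-|z_k|^2)^{-1}$ on each pseudo-hyperbolic disk $\Delta(z_k,r)$. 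Integrating this lower bound against $(1-|z|^2)^{p-2}(1-|\sigma_a(z)|^2)^s$ over $\Delta(z_k,r)$ yields a contribution of order $(1-|z_k|^2)^s(1-|\sigma_a(z_k)|^2)^s$; summing over $k$ and using (\ref{SCM}) in reverse gives the $s$-Carleson property of $\sum_k(1-|z_k|^2)^s\delta_{z_k}$.

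The main obstacle is two-fold. In the necessity direction, ruling out the singular inner factor is the delicate step, as one must leverage the hypothesis $s<1$ in an essential way. In the sufficiency direction, the secondary difficulty is handling the set on which $|B(z)|$ is small, where the logarithmic estimate breaks down, without losing the correct scaling in the Carleson bound. Both steps require a careful interplay between $p$ and $s$ in the assumed range $p>\max\{s,1-s\}$.
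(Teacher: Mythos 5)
Note first that the paper does not prove this statement at all: it is imported as Theorem A from \cite{PR} (Theorem 1.4 there), so there is no in-paper proof to compare with; the relevant benchmark is the argument of \cite{PR}, which runs through the comparability result recorded here as Lemma \ref{2F-Inner} (relating $\int|S'|^p(1-|z|^2)^{p-2}(1-|\sigma_a|^2)^s\,dm$ to $\int(1-|S|^2)^p(1-|z|^2)^{-2}(1-|\sigma_a|^2)^s\,dm$ for inner $S$) and the Ess\'en--Xiao analysis of inner functions in $\Q_s$. Your sufficiency half is close to the standard route and could be completed, but it is more complicated than necessary and skips the one genuinely $p$-dependent point: you never need $-\log|B|$ nor the splitting off of the set where $|B|$ is small, because $1-|B(z)|^2\le\sum_k\bigl(1-|\sigma_{z_k}(z)|^2\bigr)$ holds everywhere; after Schwarz--Pick the real issue is how the power $p$ is distributed over this sum (subadditivity of $t\mapsto t^p$ for $p<1$, the trivial bound $(1-|B|^2)^p\le 1-|B|^2$ for $p\ge1$), after which everything reduces to $\int_\D(1-|\sigma_{z_k}(z)|^2)^{\min(p,1)}(1-|\sigma_a(z)|^2)^s(1-|z|^2)^{-2}\,dm(z)\approx(1-|\sigma_a(z_k)|^2)^s$, and this is exactly where $p>\max\{s,1-s\}$ enters. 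Your sketch does not address this step.

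The necessity half has genuine gaps. (i) Ruling out the singular factor is asserted, not argued: a pointwise statement that ``the derivative grows too fast near $\mathrm{supp}\,\mu$'' is not available in general, and the actual mechanism is integrated, not pointwise --- one first converts membership in $F(p,p-2,s)$ into finiteness of $\sup_a\int(1-|\Theta(z)|^2)^p(1-|\sigma_a(z)|^2)^s(1-|z|^2)^{-2}\,dm(z)$ (Lemma \ref{2F-Inner}, itself nontrivial) and then uses $\log\frac{1}{|S_\mu(z)|}=\int_\T P(z,\zeta)\,d\mu(\zeta)$ and the divergence of the resulting weighted integral when $s<1$; none of this is in your plan. (ii) The lower bound $|B'(z)|\gtrsim(1-|z_k|^2)^{-1}$ on $\Delta(z_k,r)$ holds only under uniform separation of the zeros, which is not known a priori; your proposed fix is circular, since the splitting of the zero set into finitely many uniformly separated sequences is a consequence of the very $s$-Carleson property you are proving, and establishing the Carleson bound for an extracted sparse subsequence does not yield it for the whole, possibly clustered, sequence. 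The proofs in the literature avoid $B'$ at this stage, working instead with $|B(z)|\le|\sigma_{z_k}(z)|$, respectively $\log\frac{1}{|B(z)|}\ge\frac12\sum_k(1-|\sigma_{z_k}(z)|^2)$, in combination with the comparability lemma. (iii) A minor slip: each disk $\Delta(z_k,r)$ contributes $(1-|\sigma_a(z_k)|^2)^s$, not $(1-|z_k|^2)^s(1-|\sigma_a(z_k)|^2)^s$; with your expression the summation would not match the characterization (\ref{SCM}).
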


We give the following  conclusion  first.
\begin{thm}\label{intergral operat}
Suppose $g\in H(\D)$, $0<p<\infty$ and $0<s<\infty$ satisfying  $p+s>1$. Then the following statements hold:
\begin{enumerate}
  \item [(a)] if   $s>1$, or $s=1$ and $p\geq 2$,  then $I_g$ is a bounded operator from $H^\infty$ to $F(p, p-2, s)$ if and only if $g\in H^\infty$;
  \item [(b)] if   $0<s<1$, or $s=1$ and $0<p<2$,  then $I_g$ is a bounded operator from $H^\infty$ to $F(p, p-2, s)$ if and only if $g=0$;
  \item [(c)] $J_g$ is a bounded operator from $H^\infty$ to $F(p, p-2, s)$ if and only if $g\in F(p, p-2, s)$;
  \item [(d)] if   $s>1$, or $s=1$ and $p\geq 2$,  then $M_g$ is a bounded operator from $H^\infty$ to $F(p, p-2, s)$ if and only if  $g\in H^\infty$;
  \item [(e)] if   $0<s<1$, or $s=1$ and $0<p<2$, then $M_g$ is a bounded operator from $H^\infty$ to $F(p, p-2, s)$ if and only if  $g=0$.
\end{enumerate}
\end{thm}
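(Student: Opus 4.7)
My plan is to dispatch the five parts in order of increasing difficulty. I begin with (c), which is immediate from the identity $(J_g f)'(z) = f(z) g'(z)$: the pointwise bound $|f| \leq \|f\|_\infty$ under the defining integral gives $\|J_g f\|_{F(p,p-2,s)} \leq \|f\|_\infty \|g\|_{F(p,p-2,s)}$, while the constant test function $f \equiv 1$ produces $J_g 1 = g - g(0) \in F(p, p-2, s)$, settling both directions.

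For the ``if'' directions of (a) and (d) I would invoke the continuous embedding $H^\infty \hookrightarrow F(p, p-2, s)$ in the specified range: for $s > 1$ this is the classical $H^\infty \subseteq \B$, and for $s = 1$ with $p \geq 2$ it reduces to $H^\infty \subseteq BMOA \subseteq F(p, p-2, 1)$. Then $(I_g f)'(z) = f'(z) g(z)$ together with $|g| \leq \|g\|_\infty$ yields $\|I_g f\|_{F(p,p-2,s)} \lesssim \|g\|_\infty \|f\|_\infty$, and $M_g f \in H^\infty$ gives the corresponding bound for $M_g$.

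For the converse directions in (a) and (d) --- and for a first partial step of (b) and (e) --- the key is the test function $f = \sigma_a$ with $\|\sigma_a\|_\infty = 1$. Since $(I_g \sigma_a)'(z) = -g(z)(1-|a|^2)/(1-\overline{a}z)^2$, restricting the integral in $\|I_g \sigma_a\|_{F(p,p-2,s)}^p$ to the pseudo-hyperbolic disk $\Delta(a, r)$ with small fixed $r$, on which all M\"obius factors have constant order and the sub-mean-value property for $|g|^p$ applies, produces the clean lower bound $\|I_g\|^p \gtrsim |g(a)|^p$, uniform in $a \in \D$; hence $g \in H^\infty$. The argument for $M_g$ is analogous, using $(M_g \sigma_a)'(z) = g'(z)\sigma_a(z) + g(z)\sigma_a'(z)$ and $\sigma_a(a) = 0$ to isolate the same quantity.

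The main obstacle is completing (b) and (e): upgrading $g \in H^\infty$ to $g \equiv 0$ when $0 < s < 1$ or $s = 1$, $0 < p < 2$. My plan is to argue by contradiction, exploiting that $H^\infty \not\subseteq F(p, p-2, s)$ in this regime. Theorem~\ref{1F-Inner} furnishes Blaschke products $B \in H^\infty$ whose zero sequence $\{z_k\}$ is Blaschke yet $\sum_k (1-|z_k|^2)^s \delta_{z_k}$ fails to be an $s$-Carleson measure; moreover, by placing the zeros (with multiplicities) along a single radius $[0, \zeta_0)$ one can localize the blow-up of $\|B\|_{F(p,p-2,s)}$ to Carleson boxes shrinking to the chosen boundary point $\zeta_0 \in \T$. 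If $g \not\equiv 0$, then since $\log|g^*| \in L^1(\T)$ one can pick $\zeta_0$ so that $|g| \geq c > 0$ on a truncated Stolz region at $\zeta_0$. For $a$ approaching $\zeta_0$ nontangentially the weight $(1 - |\sigma_a(\cdot)|^2)^s$ inside $\|I_g B\|_{F(p,p-2,s)}^p$ concentrates on a pseudo-hyperbolic neighborhood of $a$ lying in the Stolz region, and the pointwise bound $|g|^p \geq c^p$ there transfers the divergence of the weighted integral of $|B'|^p$ to that of $|g B'|^p$, contradicting boundedness of $I_g$. The same idea handles $M_g$, the companion term $g'B$ in $(M_g B)' = g'B + gB'$ being controlled by $g \in F(p,p-2,s)$ (known from the $f \equiv 1$ test). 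The delicate technical point is verifying that the blow-up of $\|B\|_{F(p,p-2,s)}$ really is localized in Carleson boxes near $\zeta_0$, so that the lower bound on $|g|$ in a Stolz region at $\zeta_0$ suffices to produce the contradiction.
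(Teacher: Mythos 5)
Parts (a), (c), (d), and the reduction of (b), (e) to ``$I_g$ bounded $\Rightarrow g\in H^\infty$'' are fine and essentially coincide with the paper's argument: the embedding $H^\infty\subseteq F(p,p-2,s)$ in the stated range, the test functions $\sigma_a$ restricted to $\Delta(a,r)$ together with subharmonicity of $|h|^p$, and the constant test function for $J_g$. Your direct treatment of $M_g$ via $(M_g\sigma_a)'=g'\sigma_a+g\sigma_a'$ (instead of the identity $M_gf=f(0)g(0)+J_gf+I_gf$ used in the paper) is a harmless variant. The genuine problem is the last, and hardest, step of (b) and (e): upgrading $g\in H^\infty$ to $g\equiv 0$.

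There are two concrete gaps there. First, your plan is unavailable in the case $s=1$, $0<p<2$: Theorem~\ref{1F-Inner} is stated only for $0<s<1$, and the characterization of (interpolating) Blaschke products in $F(p,p-2,1)$ for $0<p<2$ by the distribution of zeros is explicitly noted in the paper to be open, so you have no supply of ``bad'' Blaschke products in exactly this range (also, for $1-s<p\leq s$ with $s>1/2$, Theorem~\ref{1F-Inner} as stated does not apply either, though (b) covers such $p$). The paper handles $s=1$, $0<p<2$ quite differently, via Girela's theorem \cite{Gir1}: there is $f\in H^\infty$ with $\int_0^1|f'(re^{i\theta})|^p(1-r)^{p-1}\,dr=\infty$ for a.e.\ $\theta$, which is precisely the ``blow-up localized at almost every boundary point'' that your construction is trying to produce, and it is combined with the positive-measure set where the radial limit of $g$ is nonzero. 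Second, even for $0<s<1$, the step you yourself flag as delicate is the actual crux and your radial construction makes it genuinely problematic: a radial Blaschke sequence whose measure $\sum_k(1-|z_k|)^s\delta_{z_k}$ fails to be $s$-Carleson cannot be separated (a separated radial sequence is exponentially thin, hence automatically $s$-Carleson), so your zeros must cluster or carry multiplicities; then $|B'|$ is \emph{not} pointwise large near the zeros, the standard lower bounds for derivatives of Blaschke products fail, and transferring the divergence of $\|B\|_{F(p,p-2,s)}$, restricted to a Stolz region where $|g|\geq c$, to $\|I_gB\|_{F(p,p-2,s)}$ is exactly the unproved step. The paper's key idea, which is missing from your outline, is Lemma 3.3 of \cite{BP}: for each $e^{i\theta}\in\T$ there is a Blaschke sequence accumulating only at $e^{i\theta}$ whose $\tfrac{1+s}{2}$-measure is Carleson (hence the sequence is a finite union of interpolating sequences) while its $s$-measure is not $s$-Carleson; extracting an interpolating subsequence along which the $s$-sum still diverges, one uses the pointwise lower bound for $(1-|z|)|B'(z)|$ on pseudo-hyperbolic disks around uniformly separated zeros to get $\sup_{a\in\D}\sum_k|g(a_{i_0k})|^p(1-|\sigma_a(a_{i_0k})|^2)^s<\infty$, which together with the divergence forces $g\to0$ along the sequence, and letting $e^{i\theta}$ vary gives $g\equiv0$ -- with no need to localize $\|B\|_{F(p,p-2,s)}$ or to preselect a boundary point where $|g|$ is bounded below.
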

\begin{proof}
(a)\   Note that $F(p_1, p_1-2, s_1)\subseteq F(p_2, p_2-2, s_2)$ for all possible $0<p_1\leq p_2$ and  $0<s_1\leq s_2$. It is also known that  $H^\infty\subseteq BMOA\subseteq \B$,  $BMOA=F(2, 0, 1)$, and  $F(p, p-2, s)=\B$ for $s>1$. Now consider    $s>1$, or $s=1$ and $p\geq 2$. Then  $H^\infty\subseteq F(p, p-2, s)$ for such $p$ and $s$. Let  $g\in H^\infty$. For any $f\in H^\infty$, then $f$ also belongs to $F(p, p-2, s)$. Thus
$$
\|I_g f\|^p_{F(p, p-2, s)}\leq \|g\|_\infty^p \| f\|^p_{F(p, p-2, s)}\lesssim \|g\|_\infty^p\|f\|_\infty^p,
$$
 which yields the boundedness of  $I_g$  from $H^\infty$ to $F(p, p-2, s)$. On the other hand, suppose $I_g$ is  bounded  from $H^\infty$ to $F(p, p-2, s)$. Note that  $\sup_{b\in \D}\|\sigma_b\|_\infty \leq 1$.  Then
\begin{align*}
\infty & > \sup_{b\in \D} \|I_g \sigma_b\|^p_{F(p, p-2, s)}\\
& \geq \sup_{b\in \D} \sup_{a\in \D} \int_{\Delta(b, 1/2)} \frac{(1-|b|^2)^p}{|1-\overline{b}z|^{2p}}|g(z)|^p (1-|z|^2)^{p-2}(1-|\sigma_a(z)|^2)^s dm(z).
\end{align*}
It is well known (cf.  \cite[p. 69]{Zhu} and \cite[Lemma 4.30]{Zhu})   that
$$
1-|z|\thickapprox1-|b|\thickapprox |1-\overline{b} z|
$$
for all $z\in \Delta(b, 1/2)$, and
$
 |1-\overline{z} a|\thickapprox  |1-\overline{b} a|
$
for all $z\in \Delta(b, 1/2)$ and all $a\in \D$. Also the area of $\Delta(b, 1/2)$ is comparable with $(1-|b|)^2$.  Combining these facts and the  subharmonicity  of $|g|^p$, we obtain
$$
1\gtrsim   \sup_{a\in \D} |g(b)|^p (1-|\sigma_a(b)|^2)^s=  |g(b)|^p
$$
for all $b\in \D$.  Thus $g\in H^\infty$.

(b)\  First, consider  $0<s<1$. Suppose  $I_g$ is a bounded operator from $H^\infty$ to $F(p, p-2, s)$. By the proof of (a),  we get $g\in H^\infty$.  For every $e^{i\theta}\in \T$, from  Lemma 3.3 in \cite{BP}, there exists a Blaschke sequence $\{a_k\}_{k=1}^\infty$ in $\D$ such that
$e^{i\theta}$ is the unique accumulation point of $\{a_k\}_{k=1}^\infty$,
\begin{equation}\label{21}
\sup_{a\in \D} \sum_{k=1}^\infty (1-|\sigma_a(a_k)|^2)^{\frac{1+s}{2}}<\infty,
\end{equation}
and
\begin{equation}\label{22}
\sup_{a\in \D} \sum_{k=1}^\infty (1-|\sigma_a(a_k)|^2)^s=+\infty.
\end{equation}
Condition (\ref{21}) implies that $\sum_{k=1}^\infty  (1-|a_k|)\delta_{a_k}$ is a Carleson measure. Hence  $\{a_k\}_{k=1}^\infty$ is finite unions of interpolation sequences for $H^\infty$.  Then there exists a positive integer $m$ and positive real numbers $\gamma_i$, $i=1, 2, \cdots, m$, such that
$$
\{a_k\}=\bigcup_{1\leq i \leq m} \{a_{ik}\}, \ \ \text{and} \ \ \inf_\ell \prod_{j\not= \ell} \rho(a_{ij}, a_{il})\geq \gamma_i, \ \ i=1, 2, \cdots, m.
$$
Because of  (\ref{22}), among these sequences $\{a_{ik}\}_{k=1}^\infty$, $i=1, 2, \cdots, m$, there exists a sequence $\{a_{i_0k}\}_{k=1}^\infty$ such that
\begin{equation}\label{23}
\sup_{a\in \D} \sum_{k=1}^\infty (1-|\sigma_a(a_{i_0k})|^2)^s=+\infty.
\end{equation}
Of course, $e^{i\theta}$ is also the unique accumulation point of $\{a_{i_0k}\}_{k=1}^\infty$. Denote by $B$ the Blaschke product associated with the sequence $\{a_{i_0k}\}_{k=1}^\infty$. By a well-known fact of interpolating sequences for $H^\infty$ (cf. \cite[p. 681]{GPV}), we know
$$
\bigcup_{k=1}^{\infty}\Delta\left(a_{i_0k},\frac{\gamma_{i_0}}{4}\right)\subseteq
\left\{z\in\mathbb{D}:(1-|z|)|B'(z)|\geq \frac{\gamma_{i_0}(1-\gamma_{i_0})}{8}\right\}.
$$
Clearly, pseudo-hyperbolic disks $\Delta\left(a_{i_0k},\frac{\gamma_{i_0}}{4}\right)$ are pairwise disjoint.
Consequently,
\begin{align*}
\infty  >&  \|I_g B\|_{F(p, p-2, s)}^p \\
=& \sup_{a\in \D} \int_\D |B'(z)|^p |g(z)|^p (1-|z|^2)^{p-2}(1-|\sigma_a(z)|^2)^s dm(z)\\
\geq & \sup_{a\in \D} \int_{\left\{z\in\mathbb{D}:(1-|z|)|B'(z)|\geq \frac{\gamma_{i_0}(1-\gamma_{i_0})}{8}\right\}} |B'(z)|^p |g(z)|^p\\
& \times  (1-|z|^2)^{p-2}(1-|\sigma_a(z)|^2)^s dm(z)\\
 \gtrsim  &  \sup_{a\in \D}  \sum_{k=1}^\infty \int_{\Delta\left(a_{i_0k},\frac{\gamma_{i_0}}{4}\right)} |g(z)|^p \frac{(1-|\sigma_a(z)|^2)^s}{(1-|z|^2)^2}dm(z)\\
 \gtrsim  &  \sup_{a\in \D}  \sum_{k=1}^\infty |g(a_{i_0k})|^p (1-|\sigma_a(a_{i_0k})|^2)^s.
\end{align*}
Combining this with (\ref{23}), we get $|g(e^{i\theta})|=\lim_{k\rightarrow \infty}|g(a_{i_0k})|=0$. Due to the arbitrariness of $e^{i\theta}$ and the maximum modulus principle, we get $g\equiv0$.
Conversely, if $g\equiv0$, it is clear that $I_g$ is bounded from $H^\infty$ to $F(p, p-2, s)$.

Second, consider the case of $s=1$ and $0<p<2$. Also,  $g\equiv0$ implies clearly the boundedness of $I_g$ from $H^\infty$ to $F(p, p-2, s)$. Next, we  follow a method from  \cite[p. 177]{CGP}.   Let $I_g$  be bounded from $H^\infty$ to $F(p, p-2, s)$. As proved in part (a), $g\in H^\infty$. Suppose $g\not \equiv 0$. Then there exists a set $E$ in $[0, 2\pi]$ with positive Lebesgue measure such  that  $\lim_{r\rightarrow 1}g(re^{i\theta})\not=0$ for every $\theta\in E$.
 For $0<p<2$,  it is known from \cite[Theorem 1]{Gir1} that there exists $f\in H^\infty$ such that
\begin{equation}\label{24}
\int_0^1 |f'(re^{i\theta})|^p (1-r)^{p-1}dr=+\infty,
\end{equation}
for almost every $\theta\in [0, 2\pi]$; that is, (\ref{24}) holds when $\theta$ belongs to  some set $F$ in $[0, 2\pi]$ and  the  Lebesgue measure of $F$ is  $2\pi$. Thus for any $\theta \in E\cap F$, there exists an $r(\theta)$ in $(0, 1)$ satisfying that $\inf_{r(\theta)<r<1}|g(re^{i\theta})|>0$ and hence
\begin{align*}
& \int_0^1 |f'(re^{i\theta})|^p |g(re^{i\theta})|^p(1-r)^{p-1}dr\\
\geq &\inf_{r(\theta)<r<1}|g(re^{i\theta})| \int_{r(\theta)}^1 |f'(re^{i\theta})|^p (1-r)^{p-1}dr\\
=& +\infty.
\end{align*}
 Clearly,  the Lebesgue measure of $E\cap F$ is also positive.  We get
\begin{equation}\label{25}
\int_\D  |f'(z)|^p |g(z)|^p (1-|z|)^{p-1}dm(z)=+\infty.
\end{equation}
But the boundedness of $I_g$ from $H^\infty$ to $F(p, p-2, s)$ gives
\begin{align*}
\infty>&\|I_g f\|_{F(p, p-2, 1)}^p  \geq \int_\D  |f'(z)|^p |g(z)|^p (1-|z|^2)^{p-1}dm(z),
\end{align*}
which contradicts (\ref{25}). Hence $g\equiv0$.

(c)\  Suppose  $J_g$ is  bounded  from $H^\infty$ to $F(p, p-2, s)$. Set  $f(z)\equiv 1$.  Then $J_g f\in F(p, p-2, s)$.  Since   $g(z)=J_g f(z)+g(0)$,  we get   $g\in F(p, p-2, s)$.  Conversely, let $g\in F(p, p-2, s)$. For $h\in H^\infty$, it is clear that $\|J_g h\|_{F(p, p-2, s)}^p \leq \|h\|_\infty^p \|g\|_{F(p, p-2, s)}^p $, which gives the bondedness of  $J_g$  from $H^\infty$ to $F(p, p-2, s)$.

(d)\ Let  $M_g$ be a bounded operator from $H^\infty$ to $F(p, p-2, s)$. Because  constant functions are in $H^\infty$, we get $g\in F(p, p-2, s)$. By (c), $J_g$ is bounded from $H^\infty$ to $F(p, p-2, s)$. Because of (\ref{20}), $I_g$ is also bounded from $H^\infty$ to $F(p, p-2, s)$. From (a), we get  $g\in H^\infty$. Conversely,  let $g\in H^\infty$. Since  $s>1$, or $s=1$ and $p\geq 2$, $g$ also belongs to $F(p, p-2, s)$. By (a), (c) and (\ref{20}), we get the boundedness of $M_g$  from $H^\infty$ to $F(p, p-2, s)$.

The proof of (e) is similar to the proof of (d),  so we omit it.
\end{proof}

The following result gives a connection between the Volterra type operator  from $H^\infty$ to $F(p, p-2, s)$ and the Carleson type measures we considered.

\begin{thm}\label{1main}
Suppose $B$ is a Blaschke product with zeros $\{a_k\}_{k=1}^\infty$. Let $0<s<1$ and $p>\max\{s, 1-s\}$. Then the following conditions are equivalent:
\begin{itemize}
  \item [(a)] $\sum_{k=1}^\infty  (1-|a_k|^2)^s \delta_{a_k}$ is an $s$-Carleson measure;
  \item [(b)]   $J_B$ is a bounded operator from $H^\infty$ to $F(p, p-2, s)$.
\end{itemize}
\end{thm}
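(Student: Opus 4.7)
The plan is to observe that this theorem is essentially an immediate consequence of two results already available in the excerpt, and the proof reduces to chaining them together. The key observation is that $B$ is a Blaschke product, hence an inner function, so Theorem~\ref{1F-Inner} of P\'erez-Gonz\'alez and R\"atty\"a applies directly. Since the hypotheses $0<s<1$ and $p>\max\{s,1-s\}$ are precisely those required by Theorem~\ref{1F-Inner}, I would first use that theorem to rewrite condition (a) as the membership statement $B\in F(p,p-2,s)$.

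Once (a) is translated into $B\in F(p,p-2,s)$, the equivalence with (b) follows from part (c) of Theorem~\ref{intergral operat}, which states that for any $g\in H(\D)$, the operator $J_g$ is bounded from $H^\infty$ to $F(p,p-2,s)$ if and only if $g\in F(p,p-2,s)$. Applied with $g=B$, this gives exactly the equivalence between $B\in F(p,p-2,s)$ and condition (b). Chaining the two equivalences yields (a) $\Longleftrightarrow$ $B\in F(p,p-2,s)$ $\Longleftrightarrow$ (b).

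There is essentially no hard step here, since all the real work has already been done: Theorem~\ref{1F-Inner} carries the analytic content about inner functions and $s$-Carleson measures defined by point masses, while part (c) of Theorem~\ref{intergral operat} is the soft observation $\|J_g h\|_{F(p,p-2,s)}\leq \|h\|_\infty\|g\|_{F(p,p-2,s)}$ together with the test function $h\equiv 1$. The only thing I would double-check is that the range of parameters $0<s<1$, $p>\max\{s,1-s\}$ is consistent with both theorems being applicable (it is: Theorem~\ref{1F-Inner} requires exactly this, and Theorem~\ref{intergral operat}(c) has no restriction beyond $p+s>1$, which $p>1-s$ guarantees). So the proof will be a short two-line argument citing Theorem~\ref{1F-Inner} and Theorem~\ref{intergral operat}(c).
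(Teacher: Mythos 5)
Your proposal is correct and is exactly the paper's own argument: the authors prove Theorem~\ref{1main} by simply citing Theorem~\ref{1F-Inner} together with Theorem~\ref{intergral operat}(c), precisely the two-step chain you describe. Nothing further is needed.
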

\begin{proof}
This result is clearly from Theorem \ref{1F-Inner} and Theorem \ref{intergral operat}.
\end{proof}

If $g(z)=-\log(1-z)$, then $J_g$ is the modified Ces\`aro operator $\widetilde{\mathcal C}$, namely,
$$
\widetilde{\mathcal C}  f(z)=\int_0^z \frac{f(w)}{1-w}dw, \ \ z\in \D, \ \ f\in H(\D).
$$
Note that the Ces\`aro operator $\mathcal C$ is defined by
$$
\mathcal C  f(z)=\frac{1}{z}\int_0^z \frac{f(w)}{1-w} dw, \ \ z\in \D, \ \ f\in H(\D).
$$
 N. Danikas and A. Siskakis \cite{DaSi} showed   that
$\mathcal{C }(H^\infty)\nsubseteq H^\infty$ but $\mathcal{C }(H^\infty)\subseteq BMOA$. M. Ess\'en and J. Xiao \cite{EX} obtained  that $\mathcal{C }(H^\infty)\subseteq \Q_s$ \ for \  $0<s<1$. Note that $F(p, p-2, s)\subsetneqq \Q_s$ when $0<p<2$ and $ \max\{1-p, 0\}<s\leq 1$. We
obtain smaller M\"obius invariant function  spaces closing to   $\mathcal{C }(H^\infty)$ as follows.

\begin{thm}\label{cesaro operator}
Let $0<p<\infty$ and $0<s<\infty$ with $p+s>1$. Then $\mathcal{C }(H^\infty)\subseteq F(p, p-2, s)$.
\end{thm}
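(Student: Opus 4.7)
The plan is to deduce the theorem from Theorem \ref{intergral operat}(c) applied to $g(z) = -\log(1-z)$, together with a transfer from the modified Ces\`aro operator $\widetilde{\mathcal C}$ to $\mathcal C$ itself.

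I first verify that $g(z) = -\log(1-z) \in F(p, p-2, s)$ whenever $p+s > 1$. Since $|g'(z)|^p = |1-z|^{-p}$, the characterization (\ref{SCM}) of $s$-Carleson measures reduces this to showing that $d\mu(z) = |1-z|^{-p}(1-|z|^2)^{p+s-2}dm(z)$ is an $s$-Carleson measure. A polar-coordinate estimate of $\mu(S(I))$, treating separately the cases when the singular point $1$ belongs to (or is close to) $I$ and when it is far from $I$, yields $\mu(S(I)) \lesssim |I|^s$ in all parameter ranges with $p+s > 1$. Because $g'(w) = 1/(1-w)$, the operator $J_g$ coincides with $\widetilde{\mathcal C}$, so Theorem \ref{intergral operat}(c) gives $\widetilde{\mathcal C}(H^\infty) \subseteq F(p, p-2, s)$ with $\|\widetilde{\mathcal C}f\|_{F(p,p-2,s)} \lesssim \|f\|_\infty$.

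To pass from $\widetilde{\mathcal C}f$ to $\mathcal C f$, I differentiate the identity $z \mathcal C f(z) = \widetilde{\mathcal C}f(z)$ to obtain $z(\mathcal C f)'(z) = f(z)/(1-z) - \mathcal C f(z)$, whence $|(\mathcal C f)'(z)| \lesssim \|f\|_\infty/|1-z| + |\mathcal C f(z)|$ on $\{|z| \geq 1/2\}$. Combined with the Bloch bound $|\mathcal C f(z)| \lesssim \|f\|_\infty \log\frac{e}{1-|z|}$, which follows from the Danikas--Siskakis inclusion $\mathcal C(H^\infty) \subseteq BMOA \subseteq \B$ \cite{DaSi}, this yields
\[
|(\mathcal C f)'(z)|^p \lesssim \|f\|_\infty^p \left(\frac{1}{|1-z|^p} + \log^p\frac{e}{1-|z|}\right), \qquad |z| \geq 1/2,
\]
while on $\{|z| < 1/2\}$ the function $(\mathcal C f)'$ is uniformly bounded by a constant multiple of $\|f\|_\infty$ (by analyticity at the origin). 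Integrating against the weight $(1-|z|^2)^{p-2}(1-|\sigma_a(z)|^2)^s dm(z)$, the first term contributes at most a constant multiple of $\|g\|_{F(p,p-2,s)}^p \|f\|_\infty^p$ by the first step; the second reduces, again via (\ref{SCM}), to verifying that $d\nu(z) = \log^p\frac{e}{1-|z|}(1-|z|^2)^{p+s-2}dm(z)$ is an $s$-Carleson measure. A Carleson box estimate analogous to the one for $\mu$ gives $\nu(S(I)) \lesssim |I|^{p+s}\log^p(1/|I|) = |I|^s\,(|I|\log(1/|I|))^p \lesssim |I|^s$, since $|I|\log(1/|I|)$ is bounded on $(0, 2\pi]$.

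The main obstacle is the pair of Carleson box estimates for $\mu$ and $\nu$: they are elementary but require careful case analysis near the singular point $z = 1$, and it is precisely the hypothesis $p+s > 1$ that guarantees the convergence of the underlying one-dimensional integrals.
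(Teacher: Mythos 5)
Your proposal is correct and takes essentially the same route as the paper: apply Theorem \ref{intergral operat}(c) with $g(z)=-\log(1-z)$ to obtain $\widetilde{\mathcal{C}}(H^\infty)\subseteq F(p,p-2,s)$, and then pass from $\widetilde{\mathcal{C}}$ to $\mathcal{C}$. The only difference is that you verify directly, via sound Carleson-box estimates, the two ingredients the paper handles by citation or leaves implicit, namely that $-\log(1-z)\in F(p,p-2,s)$ (cited from \cite{PZ}) and the division-by-$z$ transfer from $\widetilde{\mathcal{C}}f$ to $\mathcal{C}f$.
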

\begin{proof}
It is known (cf. \cite{PZ}) that  the function $g(z)=-\log(1-z)$ belongs to all nontrivial $F(p, p-2, s)$ spaces. By Theorem \ref{intergral operat}, $J_g$ is a bounded operator from $H^\infty$ to $F(p, p-2, s)$; that is,
$\widetilde{\mathcal C} (H^\infty)\subseteq F(p, p-2, s)$. Hence $\mathcal{C }(H^\infty)\subseteq F(p, p-2, s)$.
\end{proof}
One can refer to \cite{BSW, BWY1, GGM} for more results  on  the range of the Ces\`aro operator or Ces\`aro-like operators acting on $H^\infty$.

\section{$s$-Carleson measure $\sum_{n=1}^{\infty} (1-|z_n|^2)^s\delta_{z_n}$ via the  reciprocal of  a  Blaschke product in $F(p, p-2, s)$}

Let $\{z_n\}_{n=1}^\infty$ be a separated  Blaschke sequence in $\D$ and let $B$ be the Blaschke product associated with $\{z_n\}_{n=1}^\infty$. Suppose $0<p<2$.
By Theorem C in \cite{No},  $\sum_{n=1}^{\infty} (1-|z_n|^2)\delta_{z_n}$ is a Carleson measure if and only if
\begin{equation}\label{N condition1}
\sup_{\varphi \in \aut}\int_\D \frac{1}{|B(\varphi(z))|^p}dm(z)<\infty.
\end{equation}
Clearly, (\ref{N condition1}) is equivalent to
\begin{equation}\label{N condition}
\sup_{\varphi \in \aut}\int_\D \left(\frac{1}{|B(\varphi(z))|}-1\right)^pdm(z)<\infty.
\end{equation}
In this section, for $0<s<1$ and  a separated  Blaschke sequence $\{z_n\}_{n=1}^\infty$,    based on the description of Blaschke products in $F(p, p-2, s)$, we give a characterization of $s$-Carleson measure $\sum_{n=1}^{\infty} (1-|z_n|^2)^s\delta_{z_n}$ in terms of the reciprocal of the Blaschke product with zeros $\{z_n\}$. Indeed, this Blaschke product belongs to some $F(p, p-2, s)$ spaces.

 By (\ref{SCM}), for $s>0$,   $\sum_{n=1}^{\infty} (1-|z_n|^2)^s\delta_{z_n}$ is an  $s$-Carleson measure if and only if
\begin{equation}\label{31}
\sup_{\varphi\in \aut}\sum_{n=1}^{\infty} \left(1-|\varphi(z_n)|^2\right)^s <\infty.
\end{equation}
Let $\phi\in \aut$.  From   (\ref{31}),  if $\sum_{n=1}^{\infty} (1-|z_n|^2)^s\delta_{z_n}$ is an  $s$-Carleson measure, then $\sum_{n=1}^{\infty} (1-|\phi(z_n)|^2)^s\delta_{\phi(z_n)}$ is also  an  $s$-Carleson measure.
In this sense,   $s$-Carleson measure $\sum_{n=1}^{\infty} (1-|z_n|^2)^s\delta_{z_n}$ is   M\"obius invariant. Hence the equivalent characterization we will give  also  shows  this invariance.

We begin with the following auxiliary result.

\begin{lem}\label{1  auxiliary}
Let $0<p<\infty$, $0\leq q<\infty$,  and $0<s<1$ such that  $p+s>1$.  Suppose $B$ is a Blaschke product associated with  $\{z_n\}_{n=1}^\infty$ in $\D$. If
 $$
  \sup_{\varphi \in \aut} \int_\D \frac{\left(1-|B(\varphi(z))|\right)^p}{|B(\varphi(z))|^q} (1-|z|^2)^{s-2}dm(z)<\infty,
  $$
  then $\sum_{n=1}^\infty  (1-|z_n|^2)^s \delta_{z_n}$ is an $s$-Carleson measure.
\end{lem}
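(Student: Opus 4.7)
The plan is to reduce the lemma to Theorem~A by showing that, under the hypothesis, the Blaschke product $B$ itself lies in $F(p,p-2,s)$; the $s$-Carleson property of $\{z_n\}$ then follows directly from Theorem~A.

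First, since $|B|\leq 1$ on $\D$ and $q\geq 0$, the factor $|B(\varphi(z))|^{-q}\geq 1$ in the integrand is harmless, so the hypothesis already gives
$$
\sup_{\varphi\in\aut}\int_\D(1-|B(\varphi(z))|)^p(1-|z|^2)^{s-2}\,dm(z) \le C<\infty.
$$
Specializing $\varphi=\sigma_a$ and changing variables $w=\sigma_a(z)$---a hyperbolic involution that preserves $(1-|z|^2)^{-2}dm(z)$---and using $1-|\sigma_a(w)|^2=(1-|a|^2)(1-|w|^2)/|1-\overline{a}w|^2$, the assumption can be rewritten as
$$
\sup_{a\in\D}\int_\D(1-|B(w)|)^p(1-|\sigma_a(w)|^2)^s(1-|w|^2)^{-2}\,dm(w)\le C.
$$

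Next, Schwarz--Pick applied to the self-map $B\colon\D\to\D$ gives $|B'(w)|(1-|w|^2)\leq 1-|B(w)|^2\leq 2(1-|B(w)|)$, so
$$
|B'(w)|^p(1-|w|^2)^{p-2}\leq 2^p(1-|B(w)|)^p(1-|w|^2)^{-2}.
$$
Multiplying by $(1-|\sigma_a(w)|^2)^s$, integrating over $\D$, and taking the supremum in $a$, the previous display yields $\|B\|^p_{F(p,p-2,s)}\leq 2^p C$. Hence $B\in F(p,p-2,s)$, and since $B$ is an inner function, Theorem~A produces the $s$-Carleson condition on $\{z_n\}$.

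The conceptual heart of the argument is the Schwarz--Pick step, which converts the $(1-|B|)^p$ integrand of the hypothesis into the $|B'|^p(1-|z|^2)^{p-2}$ integrand appearing in the $F(p,p-2,s)$ semi-norm; the denominator $|B|^{-q}$ never plays an active role. The only bookkeeping issue is the Möbius change of variables, which is standard. The delicate point is verifying that Theorem~A is available in the parameter range asserted in the lemma: since $p+s>1$ guarantees $p>1-s$, this is immediate when $s\leq 1/2$, whereas for $s>1/2$ one relies on the fact that the necessity direction in Theorem~A continues to hold in the broader range $p>1-s$.
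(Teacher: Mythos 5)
Your argument is correct and essentially the same as the paper's: drop the harmless factor $|B|^{-q}$, use the M\"obius invariance of $(1-|z|^2)^{-2}dm(z)$ to rewrite the hypothesis, apply the Schwarz--Pick lemma to dominate the $F(p,p-2,s)$ semi-norm of $B$, and conclude via the zero-distribution characterization of Blaschke products in $F(p,p-2,s)$. The one point you flag---needing the necessity direction in the full range $p+s>1$ rather than only $p>\max\{s,1-s\}$---is handled in the paper by citing Theorem 4.3 of \cite{PR} instead of Theorem A, which is precisely the fact you rely on for $s>1/2$.
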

\begin{proof}
 The Schwarz-Pick Lemma gives
$$(1-|z|^2)|B'(z)|\leq 1-|B(z)|^2$$
for all $z\in \D$. Combining this with the change of variables, we deduce
\begin{align*}
& \sup_{a\in \D}\int_\D  |B'(z)|^p (1-|z|^2)^{p-2}(1-|\sigma_a(z)|^2)^s dm(z)\\
\lesssim &  \sup_{a\in \D} \ind  \frac{\left(1-|B(z)|\right)^p}{|B(z)|^q} \frac{(1-|\sigma_a(z)|^2)^s}{(1-|z|^2)^2} dm(z)\\
\lesssim & \sup_{\varphi \in \aut} \int_\D \frac{\left(1-|B(\varphi(z))|\right)^p}{|B(\varphi(z))|^q} (1-|z|^2)^{s-2}dm(z)\\
 <& \infty.
\end{align*}
Hence $B\in F(p, p-2, s)$. Note that  $0<p<\infty$ and $0<s<1$ with $p+s>1$.  It follows from Theorem 4.3 in \cite{PR}  that  $\sum_{n=1}^\infty  (1-|z_n|^2)^s \delta_{z_n}$ is an $s$-Carleson measure.
\end{proof}

The following useful lemma is from \cite{K}.
\begin{otherl}   \label{B away from 0}
If $B(z)$ is the Blaschke product associated with  a sequence $\{z_n\}$ satisfying  condition (\ref{uniformlyseparatedd}) for a given $\gamma$, and if
$\epsilon>0$ is given, there exists a constant $\gamma_0$ depending only on $\gamma$ and $\epsilon$ such that
$|B(z)|\geq \gamma_0$ whenever $\rho(z, z_n)\geq \epsilon$ for all $n$.
\end{otherl}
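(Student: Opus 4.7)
The plan is to control $|B(z)|$ via its logarithm. Since $|B(z)| = \prod_n \rho(z, z_n)$, I would begin from
$$\log \frac{1}{|B(z)|} = \sum_n \log \frac{1}{\rho(z, z_n)},$$
and try to bound the right-hand side by a constant depending only on $\gamma$ and $\epsilon$.

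The first step is to linearize. Using the elementary inequality $-\log x \leq (1-x)/x \leq (1-x^2)/\epsilon$, valid on $[\epsilon, 1]$, together with the hypothesis $\rho(z, z_n) \geq \epsilon$ for every $n$, I would obtain
$$\log \frac{1}{|B(z)|} \leq \frac{1}{\epsilon} \sum_n \bigl(1 - |\sigma_z(z_n)|^2\bigr) = \frac{1-|z|^2}{\epsilon} \sum_n \frac{1-|z_n|^2}{|1-\overline{z_n} z|^2}.$$
The second step is the substantive one: I must bound the latter sum uniformly in $z$. By the Carleson measure characterization \eqref{SCM} with $s=1$, this is equivalent to showing that $\sum_n (1-|z_n|^2) \delta_{z_n}$ is a $1$-Carleson measure. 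That this Carleson property holds for every uniformly separated sequence, and moreover that its Carleson constant admits a bound $C(\gamma)$ depending only on the uniform separation constant $\gamma$, is Carleson's classical theorem on interpolating sequences for $H^\infty$. Combining the two steps, $\log(1/|B(z)|) \leq C(\gamma)/\epsilon$, hence $|B(z)| \geq \gamma_0 := \exp(-C(\gamma)/\epsilon)$, as desired.

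The main point to check carefully is quantitative: the Carleson constant of the measure $\sum_n (1-|z_n|^2) \delta_{z_n}$ must depend only on the parameter $\gamma$, with no finer information about the sequence. This is standard but requires tracking through Carleson's proof (or the Shapiro--Shields argument for interpolating sequences) to confirm that each estimate is controlled purely in terms of the uniform separation constant. Once this dependence is in hand, the rest of the argument is essentially mechanical.
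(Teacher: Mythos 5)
Your argument is correct. The paper itself offers no proof of this lemma --- it is quoted from Kerr-Lawson \cite{K} --- so the relevant check is whether your proposal stands on its own, and it does. The chain $\log(1/|B(z)|)=\sum_n\log(1/\rho(z,z_n))\leq \epsilon^{-1}\sum_n(1-|\sigma_z(z_n)|^2)$ is valid: $-\log x\leq (1-x)/x$ always, and $(1-x)/x\leq(1-x^2)/\epsilon$ on $[\epsilon,1]$ because $x(1+x)\geq\epsilon$ there, and the hypothesis $\rho(z,z_n)\geq\epsilon$ is exactly what licenses applying this to every factor. The remaining input you isolate --- that $\sum_n(1-|z_n|^2)\delta_{z_n}$ is a Carleson measure with constant depending only on the uniform separation constant $\gamma$, so that via (\ref{SCM}) with $s=1$ one gets $\sup_{z\in\D}\sum_n(1-|\sigma_z(z_n)|^2)\leq C(\gamma)$ --- is indeed the necessity half of Carleson's interpolation theorem, and the dependence only on $\gamma$ is standard; one can even see it semi-directly, since (\ref{uniformlyseparatedd}) gives $\sup_m\sum_{n\neq m}(1-|\sigma_{z_m}(z_n)|^2)\leq 2\log(1/\gamma)$, and a separated sequence with this uniform bound at its own points generates a Carleson measure with controlled constant. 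For comparison, the classical proof of Kerr-Lawson type splits the product into the (boundedly many, by separation) zeros pseudohyperbolically close to $z$, each factor being at least $\epsilon$, and the far zeros, which are handled by the same Carleson-type sum; that yields a lower bound behaving like a power of $\epsilon$, whereas your one-shot logarithmic estimate gives only $\gamma_0=\exp(-C(\gamma)/\epsilon)$. Since the lemma asks merely for some $\gamma_0(\gamma,\epsilon)>0$, this loss is immaterial, and your proof is the shorter of the two.
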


Lemma \ref{|B|} below is also well-known; see \cite[p. 100]{Ye}  for  a brief proof of it.

\begin{otherl} \label{|B|}
Let  $B(z)$ be the Blaschke product  associated with  a sequence $\{z_n\}$ satisfying  condition (\ref{uniformlyseparatedd}) for a given $\gamma$.  Then there exists a positive constant $C$ depending only on
$\gamma$ such that
$$
|B(z)|\geq C \rho(z, z_k)
$$
for all  $z\in \Delta(z_k, \delta/4)$ and for every $k$.
\end{otherl}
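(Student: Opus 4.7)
The plan is to isolate the zero of $B$ at $z_k$: writing
$$
B(z) = \f{|z_k|}{z_k}\, \f{z_k - z}{1 - \overline{z_k} z}\, B_k(z), \qquad B_k(z) = \prod_{n \neq k} \f{|z_n|}{z_n}\, \f{z_n - z}{1 - \overline{z_n} z},
$$
the first factor has modulus exactly $\rho(z, z_k)$, so proving $|B(z)| \geq C\, \rho(z, z_k)$ reduces to establishing a uniform lower bound $|B_k(z)| \geq C$ on the pseudo-hyperbolic disk $\Delta(z_k, \delta/4)$. Note that $B_k$ is a well-defined Blaschke product, since $\{z_n\}_{n \neq k}$ inherits the Blaschke condition from $\{z_n\}$.

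At the center $z_k$, the uniform separation hypothesis (\ref{uniformlyseparatedd}) yields immediately $|B_k(z_k)| \geq \gamma$. To propagate this bound to a full pseudo-hyperbolic neighborhood of $z_k$, I would apply the Schwarz-Pick inequality to $B_k$ viewed as a holomorphic self-map of $\D$, obtaining $\rho(B_k(z), B_k(z_k)) \leq \rho(z, z_k)$. Unravelling the definition $\rho(w_1, w_2) = |w_1 - w_2| / |1 - \overline{w_2} w_1|$ and using the trivial bound $|1 - \overline{w_2} w_1| \leq 2$ for points in the closed unit disk, this translates to $|B_k(z) - B_k(z_k)| \leq 2\, \rho(z, z_k)$. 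Hence
$$
|B_k(z)| \geq |B_k(z_k)| - 2\rho(z, z_k) \geq \gamma - 2\rho(z, z_k).
$$
Choosing $\delta$ small enough in terms of $\gamma$ (for instance $\delta \leq \gamma$ suffices so that $\rho(z, z_k) \leq \delta/4$ forces $|B_k(z)| \geq \gamma/2$), we arrive at $|B(z)| \geq (\gamma/2)\, \rho(z, z_k)$ on $\Delta(z_k, \delta/4)$, and may take $C = \gamma/2$.

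The only nontrivial step is the quantitative Schwarz-Pick comparison that converts the pseudo-hyperbolic estimate for $B_k$ into the Euclidean estimate $|B_k(z) - B_k(z_k)| \lesssim \rho(z, z_k)$; the factorization and the final arithmetic are purely mechanical. The other point worth checking is that the parameter $\delta$ in the statement is consistent with the radius fixed in the ambient application of the lemma: provided $\delta$ has been taken no larger than the separation constant $\gamma$, the argument above produces a constant $C$ depending only on $\gamma$, as required.
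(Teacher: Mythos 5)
Your proof is correct. Note that the paper does not actually prove this lemma: it is quoted as known, with the proof deferred to \cite[p.\ 100]{Ye}, and the standard argument there (and behind the related fact from \cite{GPV} used in Section 2) bounds the deleted product $\prod_{n\neq k}\rho(z,z_n)$ either factor by factor via the pseudo-hyperbolic triangle-type inequality, or by first observing $\rho(z,z_n)\geq \gamma-\gamma/4$ for $n\neq k$ and then invoking a Kerr--Lawson type estimate (Lemma \ref{B away from 0}). Your route shares the same first step --- peeling off the factor at $z_k$, whose modulus is exactly $\rho(z,z_k)$, and reducing to a uniform lower bound for $B_k$ on $\Delta(z_k,\gamma/4)$ --- but then propagates the bound $|B_k(z_k)|=\prod_{n\neq k}\rho(z_k,z_n)\geq\gamma$ from the center by the Schwarz--Pick inequality, using $|w_1-w_2|\leq 2\rho(w_1,w_2)$ to pass to the Euclidean estimate $|B_k(z)-B_k(z_k)|\leq 2\rho(z,z_k)$. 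This is a genuinely different and more self-contained argument: it needs no auxiliary lemma on interpolating Blaschke products and produces the explicit constant $C=\gamma/2$, whereas the Kerr--Lawson route gives a less explicit $\gamma_0(\gamma)$; the only thing it gives up is that the classical estimates also yield lower bounds off the union of the disks, which your argument does not address (and need not, for this statement). You also correctly flag the notational glitch in the statement: the radius $\delta/4$ should be read with $\delta=\gamma$ (this is how the lemma is applied later, on $\Delta(z_{in},\gamma_i/4)$), and your proviso $\delta\leq\gamma$ handles exactly that, with $C$ depending only on $\gamma$ as required.
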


The following conclusion   is  Corollary 2.4 in \cite{PR}.
\begin{otherl} \label{2F-Inner}
Let $S$ be an inner function and let $1\leq p<\infty$, $-2<q<\infty$, and $0\leq s, p^*<\infty$ such that $p>q+s+1>0$. Then, for any analytic function $f$ in $\D$ and $a\in \D$, the following quantities are comparable:
\begin{itemize}
  \item [(a)] $$\int_\D |f(z)|^{p^*}(1-|S(z)|^2)^p (1-|z|^2)^{q-p}(1-|\sigma_a(z)|^2)^s dm(z); $$
  \item [(b)] $$\int_\D |f(z)|^{p^*}|S'(z)|^p (1-|z|^2)^{q}(1-|\sigma_a(z)|^2)^s dm(z). $$
\end{itemize}
\end{otherl}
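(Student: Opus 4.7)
My plan splits into two inequalities, of which one is immediate and the other carries the substance.

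The direction (b) $\lesssim$ (a) follows directly from the Schwarz-Pick inequality $(1-|z|^2)|S'(z)| \leq 1-|S(z)|^2$, which holds for any holomorphic self-map $S$ of $\D$. Taking $p$-th powers and factoring,
$$|S'(z)|^p (1-|z|^2)^q = [(1-|z|^2)|S'(z)|]^p\,(1-|z|^2)^{q-p} \leq (1-|S(z)|^2)^p (1-|z|^2)^{q-p}.$$
Multiplying through by $|f(z)|^{p^*}(1-|\sigma_a(z)|^2)^s$ and integrating yields (b) $\leq$ (a) with absolute constant one; no innerness is used.

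The reverse inequality (a) $\lesssim$ (b) is the real content, and Schwarz-Pick is useless here because it goes the wrong way pointwise. The plan is to exploit that $S$ is \emph{inner}, so $|S(e^{i\theta})|=1$ almost everywhere and hence $1-|S|^2$ has zero boundary values on $\T$. I would use a Green's-identity argument: a direct computation from $\Delta|S|^2 = 4|S'|^2$ gives
$$\Delta(1-|S(z)|^2)^p = 4p\bigl[(p-1)|S(z)|^2 - (1-|S(z)|^2)\bigr]\,(1-|S(z)|^2)^{p-2}|S'(z)|^2.$$
Integrating $(1-|S|^2)^p\,\Delta\Phi$ by parts twice over $\{|z|<r\}$ with a test weight $\Phi(z) \asymp |f(z)|^{p^*}(1-|z|^2)^{q-p+2}(1-|\sigma_a(z)|^2)^s$ (so that $\Delta \Phi$ reproduces the weight in (a) up to a factor of $(1-|z|^2)^{-2}$), and letting $r \to 1^-$ so that the boundary terms vanish thanks to $(1-|S|^2)|_{\partial\D}=0$, one converts the integral in (a) into an integral of $|S'|^2(1-|S|^2)^{p-1}$ against a weight with one extra power of $1-|z|^2$. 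Iterating this reduction (or invoking it once with a fractional power of $1-|S|^2$ followed by Hölder with exponents $p/2$ and $p/(p-2)$) replaces $(1-|S|^2)^p$ by $|S'|^p(1-|z|^2)^p$, producing the integrand in (b).

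I expect two main obstacles. First, the factor $|f|^{p^*}$ is subharmonic but not smooth, so differentiating it in Green's identity is illegitimate. I would bypass this by invoking sub-mean-value inequalities to replace $|f(z)|^{p^*}$ by its average over $\Delta(z,1/4)$; since all other quantities (including $1-|z|^2$, $(1-|\sigma_a(z)|^2)$, $|S'|$, and $1-|S|^2$) are comparable to their values at $z$ on each such disk, the $|f|^{p^*}$ factor decouples from the Green's-formula step and only the purely-weighted identity needs to be proved. Second, every integration by parts produces tail terms that must be absorbable. This is precisely where the parameter constraint $p > q+s+1 > 0$ enters: it is the exact range in which $(1-|z|^2)^{q-p}(1-|\sigma_a(z)|^2)^s$ is tame enough near $\T$ that the vanishing boundary contributions actually dominate the tails and the iteration closes.
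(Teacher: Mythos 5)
First, note that the paper does not prove this statement at all: it is quoted verbatim as Corollary~2.4 of \cite{PR}, so there is no in-paper argument to compare against and your sketch has to stand on its own. Your easy direction is fine: the Schwarz--Pick estimate $(1-|z|^2)|S'(z)|\leq 1-|S(z)|^2$ gives (b) $\leq$ (a) with constant one, exactly as anyone would do, and innerness is indeed irrelevant there. Your Laplacian formula for $(1-|S|^2)^p$ is also computed correctly.

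The direction (a) $\lesssim$ (b), however, is not established by your sketch, and the gaps are precisely at the points where innerness must do quantitative work (it must: for a non-inner self-map such as $S\equiv 1/2$, quantity (b) vanishes while (a) does not). First, the claim that the boundary terms in Green's formula ``vanish thanks to $(1-|S|^2)|_{\T}=0$'' is unjustified: an inner function has radial limits of modulus one only almost everywhere and with no rate, so $1-|S(rz)|^2$ does not tend to $0$ uniformly; worse, your test weight $(1-|z|^2)^{q-p+2}$ need not vanish on $\T$ and can even blow up, since the hypotheses only give $p>q+s+1$ and allow $p>q+2$. Controlling these boundary/tail terms is the entire content of the lemma, not a routine remark. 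Second, the proposed decoupling of $|f|^{p^*}$ cannot reduce matters to a ``purely-weighted identity'': no local version of (a) $\lesssim$ (b) holds (for $S(z)=z^n$ and $z$ near $0$ the density in (a) is of size one while $|S'|$ is uniformly small there, and $|S'|$ is in any case not comparable to its central value on $\Delta(z,1/4)$ because it may vanish inside), so the arbitrary factor $|f|^{p^*}$ must remain inside the global argument, where it obstructs the integration by parts you rely on. Third, the closing step --- H\"older with exponents $p/2$ and $p/(p-2)$ followed by absorption of the (a)-type term --- requires $p>2$, an a priori finiteness of (a) (otherwise nothing can be absorbed; one would need to work with dilations $S_r$ and uniform constants), and a separate treatment of $1\leq p<2$, none of which is addressed. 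For comparison, the proofs in the literature (\cite{PR} and its sources) exploit identities valid exactly for inner functions, such as $1-|S(z)|^2=\int_{\T}P(z,\zeta)\,|S(\zeta)-S(z)|^2\,|d\zeta|/(2\pi)$ with $P$ the Poisson kernel, together with Forelli--Rudin-type estimates, rather than a boundary-term argument of the kind you outline; as it stands, your proposal is a plausible heuristic but not a proof.
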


Note that if  $0<s<1$ and $\sum_{n=1}^\infty  (1-|z_n|^2)^s \delta_{z_n}$ is an $s$-Carleson measure,  then $\{z_n\}_{n=1}^\infty$ is finite unions of uniformly separated sequences.
Using Lemma \ref{B away from 0},  Lemma \ref{|B|} and Lemma \ref{2F-Inner}, we prove the following result.

\begin{lem}\label{2  auxiliary}
Suppose $0<s<1$ and  $\{z_n\}_{n=1}^\infty$ is a sequence in $\D$ such that $\sum_{n=1}^\infty  (1-|z_n|^2)^s \delta_{z_n}$ is an $s$-Carleson measure.   Let  $B$ be the  Blaschke product associated with  $\{z_n\}_{n=1}^\infty$. Assume   $1\leq p <\infty$  and $0\leq q<2/k$, where $k$ is the number of unions of uniformly separated sequences as  which   $\{z_n\}_{n=1}^\infty$ can be written. Then
$$
  \sup_{\varphi \in \aut} \int_\D \frac{\left(1-|B(\varphi(z))|\right)^p}{|B(\varphi(z))|^q} (1-|z|^2)^{s-2}dm(z)<\infty.
  $$
\end{lem}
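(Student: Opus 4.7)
The plan is to reduce the M\"obius supremum to a supremum over $a\in\D$ by a change of variables, to use H\"older's inequality on the product structure $B=\prod_i B_i$ to handle $|B|^{-q}$, and then to estimate each resulting integral by splitting the domain into regions near and far from the zeros.

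First I would write $\varphi\in\aut$ as $e^{i\theta}\sigma_{a'}$ and change variables $w=\varphi(z)$. The identities
$$1-|\sigma_{a'}(w)|^2=\frac{(1-|a'|^2)(1-|w|^2)}{|1-\overline{a'}w|^2},\qquad |\sigma_{a'}'(w)|^2=\frac{(1-|a'|^2)^2}{|1-\overline{a'}w|^4}$$
combine to give $(1-|z|^2)^{s-2}dm(z)=(1-|\sigma_{a'}(w)|^2)^s(1-|w|^2)^{-2}dm(w)$, so the statement reduces to proving
$$\sup_{a\in\D}\ind \frac{(1-|B(w)|)^p}{|B(w)|^q}\,\frac{(1-|\sigma_a(w)|^2)^s}{(1-|w|^2)^2}\,dm(w)<\infty.$$

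Next, write $\{z_n\}=\bigcup_{i=1}^k \{z_n^{(i)}\}$ with each $\{z_n^{(i)}\}$ uniformly separated by some $\gamma_i$, and let $B_i$ be the associated Blaschke product, so that $B=\prod_i B_i$ and $|B|^{-q}=\prod_i|B_i|^{-q}$. H\"older's inequality with all exponents equal to $k$ then bounds the displayed integral above by $\prod_i K_i(a)^{1/k}$, where
$$K_i(a):=\ind (1-|B(w)|)^p\,|B_i(w)|^{-kq}\,\frac{(1-|\sigma_a(w)|^2)^s}{(1-|w|^2)^2}\,dm(w),$$
so it suffices to bound each $K_i(a)$ uniformly in $a$.

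Fix $i$ and pick $\delta\in(0,\gamma_i/4)$ small enough that Lemma~\ref{|B|} applies and the disks $\Delta(z_n^{(i)},\delta)$ are pairwise disjoint. Split $K_i(a)$ as an integral over $A_i:=\bigcup_n\Delta(z_n^{(i)},\delta)$ plus one over $\D\setminus A_i$. On $\D\setminus A_i$, Lemma~\ref{B away from 0} gives $|B_i|\gtrsim 1$, so that piece is dominated by $\int_\D (1-|B(w)|)^p(1-|\sigma_a(w)|^2)^s(1-|w|^2)^{-2}dm(w)$. Applying Lemma~\ref{2F-Inner} with $S=B$, $p^*=0$, and the lemma's parameter $q$ equal to $p-2$ (its hypotheses follow from $p\geq 1$ and $0<s<1$), this quantity is comparable to $\|B\|_{F(p,p-2,s)}^p$, which is finite by Theorem~\ref{1F-Inner} since $p\geq 1>\max\{s,1-s\}$ and $\sum(1-|z_n|^2)^s\delta_{z_n}$ is $s$-Carleson. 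On each $\Delta(z_n^{(i)},\delta)$, Lemma~\ref{|B|} gives $|B_i(w)|\gtrsim \rho(w,z_n^{(i)})$, the factor $(1-|B(w)|)^p$ is bounded (because $|B|\leq|B_i|\leq\delta<1$), and the standard estimates $1-|w|^2\thickapprox 1-|z_n^{(i)}|^2$ and $1-|\sigma_a(w)|^2\thickapprox 1-|\sigma_a(z_n^{(i)})|^2$ hold uniformly on the disk. A change of variables $\zeta=\sigma_{z_n^{(i)}}(w)$ then reduces the disk integral to a constant multiple of $(1-|\sigma_a(z_n^{(i)})|^2)^s\int_{|\zeta|<\delta}|\zeta|^{-kq}dm(\zeta)$, whose inner integral is finite precisely because $kq<2$, exactly the place where the hypothesis $q<2/k$ is used. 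Summing over $n$ and using $\{z_n^{(i)}\}\subseteq\{z_n\}$ together with \eqref{31} yields $\int_{A_i}\lesssim\sup_a\sum_n(1-|\sigma_a(z_n)|^2)^s<\infty$.

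The main obstacle is that one cannot simply estimate $(1-|B(w)|)^p\le 1$ on the region where $|B|$ is bounded below, since the residual integrand $(1-|\sigma_a(w)|^2)^s(1-|w|^2)^{-2}$ is not integrable. Retaining the full $(1-|B|)^p$ factor and converting it through Lemma~\ref{2F-Inner} into $|B'|^p(1-|w|^2)^{p-2}$ is the crucial step that lets the finite seminorm of $B$ in $F(p,p-2,s)$ absorb the "far from zeros" portion of each $K_i(a)$.
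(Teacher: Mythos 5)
Your proposal is correct and follows essentially the same route as the paper: change of variables to reduce to a supremum over $a\in\D$, H\"older's inequality with exponents $k$ to separate the factors $|B_i|^{-q}$, Lemma~\ref{B away from 0} plus Lemma~\ref{2F-Inner} and Theorem~\ref{1F-Inner} (via $\|B\|_{F(p,p-2,s)}<\infty$) away from the zeros, and Lemma~\ref{|B|} with the local integrability condition $kq<2$ and the $s$-Carleson condition near the zeros. The only (harmless) difference is that you split each H\"older factor using only the disks around the $i$-th subsequence's zeros, whereas the paper removes the union of all the disks at once; this is a slightly cleaner bookkeeping of the same argument.
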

\begin{proof}  Since  $\{z_n\}_{n=1}^\infty$ is $k$ unions of uniformly separated sequences,  there exist  positive real numbers $\gamma_i$, $i=1, 2, \cdots, k$, such that
$$
\{z_n\}_{n=1}^\infty=\bigcup_{1\leq i \leq k} \{z_{in}\}_{n=1}^\infty,
$$
and
$$
\inf_\ell \prod_{j\not= \ell} \rho(z_{ij}, z_{il})\geq \gamma_i, \ \ i=1, 2, \cdots, k.
$$
Denote by $B_i$ the Blaschke product associated with $\{z_{in}\}_{n=1}^\infty$.

By the change of variables, we get
  \begin{align} \label{equivalent b}
&  \sup_{\varphi \in \aut} \int_\D \frac{\left(1-|B(\varphi(z))|\right)^p}{|B(\varphi(z))|^q} (1-|z|^2)^{s-2}dm(z) \nonumber \\
=& \sup_{a\in \D} \ind \frac{\left(1-|B(z)|\right)^p}{|B(z)|^q} \frac{(1-|\sigma_a(z)|^2)^s}{(1-|z|^2)^2} dm(z).
\end{align}
From   a version of H\"older's inequality that handles the product of more than two functions (cf. \cite[Theorem 3.4]{Zhu}), we get
\begin{align} \label{060301}
&  \sup_{a\in \D} \ind \frac{\left(1-|B(z)|\right)^p}{|B(z)|^q} \frac{(1-|\sigma_a(z)|^2)^s}{(1-|z|^2)^2} dm(z)  \nonumber \\
\leq &  \sup_{a\in \D}  \prod_{i=1}^k  \left(\ind  \frac{1}{|B_i(z)|^{qx_i}}   \frac{\left(1-|B(z)|\right)^p(1-|\sigma_a(z)|^2)^s}{(1-|z|^2)^2} dm(z)\right)^{1/x_i},
\end{align}
where $x_1$, $\cdots$, $x_k$ are positive numbers such that
\begin{equation}\label{0603bu}
\frac{1}{x_1}+\cdots+\frac{1}{x_k}=1.
\end{equation}

By Lemma \ref{B away from 0}, for fixed $i=1, 2, \cdots, k$,  there exists a positive real number $\alpha_i$ depending only on $\gamma_i$ such that $|B_i(z)|\geq \alpha_i$ whenever $\rho(z, z_{in})\geq \gamma_i/4$ for all $n$. Write
$$
\Omega=\D \setminus \left(\bigcup_{1\leq i \leq k} \bigcup_{n=1}^\infty \Delta(z_{in}, \frac{\gamma_i}{4}) \right).
$$
Note that $q \geq 0$.
Then
\begin{align} \label{060302}
& \sup_{a\in \D}  \prod_{i=1}^k  \left(\int_\Omega  \frac{1}{|B_i(z)|^{qx_i}}   \frac{\left(1-|B(z)|\right)^p(1-|\sigma_a(z)|^2)^s}{(1-|z|^2)^2} dm(z)\right)^{1/x_i} \nonumber \\
\leq &   \prod_{i=1}^k  \frac{1}{\alpha_i^q} \sup_{a\in \D}  \left(\int_\D    \frac{\left(1-|B(z)|\right)^p(1-|\sigma_a(z)|^2)^s}{(1-|z|^2)^2} dm(z)\right)^{1/x_i}.
\end{align}
Bear in mind that  $0<s<1$ and  $1\leq p <\infty$.  It follows from Lemma \ref{2F-Inner} and Theorem \ref{1F-Inner} that
\begin{equation} \label{060303}
 \sup_{a\in \D}  \int_\D    \frac{\left(1-|B(z)|\right)^p(1-|\sigma_a(z)|^2)^s}{(1-|z|^2)^2} dm(z)\thickapprox \|B\|_{F(p, p-2, s)}^p<\infty.
\end{equation}
Thus, (\ref{060302}) and (\ref{060303}) yield
\begin{equation} \label{060304}
\sup_{a\in \D}  \prod_{i=1}^k  \left(\int_\Omega  \frac{1}{|B_i(z)|^{qx_i}}   \frac{\left(1-|B(z)|\right)^p(1-|\sigma_a(z)|^2)^s}{(1-|z|^2)^2} dm(z)\right)^{1/x_i}<\infty.
\end{equation}

On the other hand, by Lemma \ref{|B|}, we deduce
\begin{align*}
& \sup_{a\in \D}  \prod_{i=1}^k  \left(\int_{\D \setminus \Omega}  \frac{1}{|B_i(z)|^{qx_i}}   \frac{\left(1-|B(z)|\right)^p(1-|\sigma_a(z)|^2)^s}{(1-|z|^2)^2} dm(z)\right)^{1/x_i} \nonumber \\
\lesssim &    \sup_{a\in \D}  \prod_{i=1}^k   \left( \sum_{i=1}^k  \sum_{n=1}^\infty \int_{\Delta(z_{in}, \frac{\gamma_i}{4}) }  \frac{\left(1-|B(z)|\right)^p(1-|\sigma_a(z)|^2)^s}{(\rho(z, z_{in}))^{qx_i}(1-|z|^2)^2} dm(z) \right)^{1/x_i} \nonumber \\
\lesssim &    \sup_{a\in \D}  \prod_{i=1}^k   \left( \sum_{i=1}^k  \sum_{n=1}^\infty \frac{(1-|\sigma_a(z_{in})|^2)^s}{(1-|z_{in}|)^2}  \int_{\Delta(z_{in}, \frac{\gamma_i}{4})}  (\rho(z, z_{in}))^{-qx_i} dm(z)  \right)^{1/x_i}.
\end{align*}
If every $x_i$ satisfies  $qx_i<2$, then a change of variables yields
\begin{align*}
&\int_{\Delta(z_{in}, \frac{\gamma_i}{4})}  (\rho(z, z_{in}))^{-qx_i} dm(z)\thickapprox (1-|z_{in}|)^2 \int_0^{\frac{\gamma_i}{4}} r^{1-qx_i}dr\thickapprox (1-|z_{in}|)^2.
\end{align*}
Condition (\ref{0603bu}) and  $qx_i<2$ for every $x_i$ yield the assumed  condition  $q<2/k$. In fact, taking  $x_1=\cdots=x_k=k$ is   enough for this  proof. Combining these with (\ref{31}), we get
\begin{align} \label{060305}
&
\sup_{a\in \D}  \prod_{i=1}^k  \left(\int_{\D \setminus \Omega}  \frac{1}{|B_i(z)|^{qx_i}}   \frac{\left(1-|B(z)|\right)^p(1-|\sigma_a(z)|^2)^s}{(1-|z|^2)^2} dm(z)\right)^{1/x_i} \nonumber \\
\lesssim & \sup_{a\in \D}  \prod_{i=1}^k  \left(  \sum_{n=1}^\infty (1-|\sigma_a(z_{n})|^2)^s  \right)^{1/x_i}<\infty.
\end{align}
Joining (\ref{equivalent b}), (\ref{060301}), (\ref{060304}) and (\ref{060305}),  we get the desired result.
\end{proof}

Now we state  the main result of this section as follows.

\begin{thm}\label{2main}
Let $0<s<1$ and $1\leq p< 2$.  Suppose  $\{z_n\}_{n=1}^\infty$ is a separated  Blaschke sequence in $\D$ and  $B$ is  the Blaschke product associated with $\{z_n\}_{n=1}^\infty$.   Then the following conditions are equivalent:
\begin{itemize}
  \item [(a)]  $\sum_{n=1}^\infty  (1-|z_n|^2)^s \delta_{z_n}$ is an $s$-Carleson measure;
  \item [(b)] $$
  \sup_{\varphi \in \aut} \int_\D \left(\frac{1}{|B(\varphi(z))|}-1\right)^p (1-|z|^2)^{s-2}dm(z)<\infty.
  $$
\end{itemize}
\end{thm}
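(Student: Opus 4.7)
The overall strategy is to derive both implications from Lemmas \ref{1  auxiliary} and \ref{2  auxiliary}, after noting the elementary identity
\[
\left(\frac{1}{|B(\varphi(z))|}-1\right)^{p}=\frac{(1-|B(\varphi(z))|)^{p}}{|B(\varphi(z))|^{p}}.
\]
This identifies condition (b) of Theorem \ref{2main} with precisely the quantity that appears in those two lemmas under the parameter choice $q=p$, so the proof will reduce to checking that the hypotheses $1\leq p<2$ and $0<s<1$ place us in the admissible range of each lemma.

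For the direction (b) $\Rightarrow$ (a) I would simply invoke Lemma \ref{1  auxiliary} with $q=p$. Its required constraints $0<p<\infty$, $q\geq 0$, $0<s<1$, and $p+s>1$ all hold at once from the standing hypotheses, so the conclusion that $\sum_{n}(1-|z_n|^{2})^{s}\delta_{z_n}$ is an $s$-Carleson measure is immediate.

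For the direction (a) $\Rightarrow$ (b) the plan is to apply Lemma \ref{2  auxiliary} with $q=p$. The nontrivial restriction there is $q<2/k$, where $k$ is the number of uniformly separated subsequences into which $\{z_n\}$ decomposes. Since $p\in[1,2)$, this forces $k=1$, so the key step is to upgrade the hypothesis ``separated'' to ``uniformly separated''. This is the point at which both the separation assumption and the constraint $s<1$ in (a) enter decisively: for any arc $I\subseteq\T$ each $z_n\in S(I)$ satisfies $1-|z_n|^{2}\lesssim|I|$, so
\[
\sum_{z_n\in S(I)}(1-|z_n|^{2})\leq|I|^{1-s}\sum_{z_n\in S(I)}(1-|z_n|^{2})^{s}\lesssim|I|^{1-s}\cdot|I|^{s}=|I|,
\]
showing that $\sum_{n}(1-|z_n|^{2})\delta_{z_n}$ is a $1$-Carleson measure. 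Combined with the separation hypothesis and the classical characterization recalled in Section 1, this gives uniform separation ($k=1$). Lemma \ref{2  auxiliary} then applies with $k=1$ and $q=p<2$, yielding (b).

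The main obstacle (and the main conceptual ingredient) is the upgrade ``separated plus $s$-Carleson with $s<1$ $\Rightarrow$ uniformly separated'', which is what makes the range $1\leq p<2$ compatible with Lemma \ref{2  auxiliary}. Once this observation is in hand, the remaining work is just a verification of the parameter ranges in Lemmas \ref{1  auxiliary} and \ref{2  auxiliary}.
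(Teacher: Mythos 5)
Your proposal is correct and follows essentially the same route as the paper: both implications come from Lemma \ref{1  auxiliary} and Lemma \ref{2  auxiliary} with $q=p$, together with the observation that a separated sequence generating an $s$-Carleson measure ($0<s<1$) is uniformly separated, so $k=1$ and $p<2=2/k$. You even spell out the H\"older-type upgrade to a $1$-Carleson measure that the paper states without proof, which is a welcome addition but not a different argument.
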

\begin{proof}
Note that if   $\{z_n\}_{n=1}^\infty$ is a separated  Blaschke sequence such that  $\sum_{n=1}^\infty  (1-|z_n|^2)^s \delta_{z_n}$ is an $s$-Carleson measure for  $0<s<1$, then $\{z_n\}_{n=1}^\infty$ is a  uniformly separated sequence.
 Set  $q=p$ in Lemma \ref{1  auxiliary} and Lemma \ref{2  auxiliary}.  The conclusion follows.
\end{proof}

Taking  $s=1$ in (b) of Theorem \ref{2main}, we do not get C. Nolder's condition (\ref{N condition}).
 In the proof of Theorem \ref{2main}, the characterization of Blaschke products in $F(p, p-2, s)$  plays an important role, where the condition $0<s<1$ was used.

\section{$s$-Carleson measure  $\sum_{n=1}^{\infty} (1-|z_n|^2)^s\delta_{z_n}$ and $F(p, p-2, s)$ via  $f''+Af=0$}

In this section, for  $0<s<1$,  $p>\max\{s, 1-s\}$,  $q>\max\{s, 1-s\}$ and a separated  sequence $\{z_n\}_{n=1}^\infty$ in $\D$ satisfying  that $\sum_{n=1}^{\infty} (1-|z_n|^2)^s\delta_{z_n}$ is an  $s$-Carleson measure, we show that there exists a function  $A$ analytic in $\mathbb{D}$ such that $|A(z)|^q(1-|z|^2)^{2q-2+s}dm(z)$ is  an $s$-Carleson measure and the equation $f''+Af=0$ admits a nontrivial solution $f \in F(p, p-2, s) \cap H^\infty$ whose zero-sequence is $\{z_n\}_{n=1}^\infty$.   From the perspective of larger  ranges of parameters $p$ and $q$ given  here, our  result improves some previous conclusions.

For  $A\in H(\D)$, it is well known that  all solutions of  the second order complex differential equation
\begin{equation}\label{41}
f''+Af=0
\end{equation}
belong to $H(\D)$.  For a  sequence $\{z_n\}$ of distinct points in  $\D$,  if  there exists    $A\in H(\D)$ such that (\ref{41}) has a solution $f$ with zeros precisely at the
points $z_n$, then  $\{z_n\}$ is said to be  a prescribed zero sequence (cf.  \cite{H1, HL}).
See  \cite{H2} for  a  historical review in this area.   In fact, by (\ref{41}), $A=-f''/f$, which gives that any $z_n$ must be a simple zero of $f$; otherwise $A$ is not analytic at $z_n$. We refer to \cite{Gr1} for some recent results on (\ref{41}) associated with Carleson measures. 

For any positive integer $n$,
recall that the Bloch space $\B$ is also equal to the set  of functions $f\in H(\D)$ such that
$$
\sup_{z\in \D} (1-|z|^2)^n|f^{(n)}(z)|<\infty.
$$
For $0\leq \alpha<\infty$, the growth space $H^\infty_\alpha$ is the set of  functions $g\in H(\D)$ satisfying
$$
\|g\|_{H^\infty_\alpha}=\sup_{z\in \D}(1-|z|^2)^\alpha |g(z)|<\infty.
$$
The following result is  Lemma 1 in  \cite{Gr}.

\begin{otherl} \label{the growth}
Let $g\in H^\infty_\alpha$ for $0\leq \alpha<\infty$, and let $0<\gamma<1$. If $g(z_0)=0$ for some $z_0\in \D$, then there exists a positive constant
$C=C(\alpha, \gamma)$ such that
$$
|g(z)|\leq \frac{C \|g\|_{H^\infty_\alpha}\rho(z, z_0)}{(1-|z_0|^2)^\alpha}, \ \ z\in \Delta(z_0, \gamma).
$$
\end{otherl}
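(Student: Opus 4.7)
My plan is to reduce the estimate to a Schwarz-lemma argument after a M\"obius change of variables. Fix $z_0\in\D$ with $g(z_0)=0$. Setting $w=\sigma_{z_0}(z)$, the pseudo-hyperbolic disk $\Delta(z_0,\gamma)$ corresponds to $\{|w|<\gamma\}$ and $\rho(z,z_0)=|w|$. The key auxiliary function would be
\[
F(w):=\frac{g(\sigma_{z_0}(w))}{w},
\]
which a priori is defined only on $\D\setminus\{0\}$ but in fact extends to $F\in H(\D)$, because the assumption $g(z_0)=0$ turns $w=0$ into a removable singularity. Since $|g(z)|=\rho(z,z_0)\,|F(w)|$, the lemma reduces to showing
\[
|F(w)|\lesssim \frac{\|g\|_{H^\infty_\alpha}}{(1-|z_0|^2)^\alpha}\qquad \text{uniformly on } \{|w|<\gamma\},
\]
with a constant depending only on $\alpha$ and $\gamma$.

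To bound $F$, I would first control $|F|$ on the single circle $|w|=(1+\gamma)/2$ and then invoke the maximum modulus principle to propagate the bound inward. The growth-space norm together with the identity $1-|\sigma_{z_0}(w)|^2=(1-|z_0|^2)(1-|w|^2)/|1-\overline{z_0}w|^2$ gives
\[
|g(\sigma_{z_0}(w))|\le \|g\|_{H^\infty_\alpha}\,\frac{|1-\overline{z_0}w|^{2\alpha}}{(1-|z_0|^2)^\alpha(1-|w|^2)^\alpha}.
\]
On the circle $|w|=(1+\gamma)/2$, both $|1-\overline{z_0}w|\le 2$ and $1-|w|^2\ge (1-\gamma)(3+\gamma)/4$ uniformly in $z_0$, so the right-hand side is at most $C(\alpha,\gamma)\|g\|_{H^\infty_\alpha}/(1-|z_0|^2)^\alpha$. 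Dividing by $|w|=(1+\gamma)/2$ transfers the same estimate to $|F|$ on this circle, and the maximum modulus principle extends it to the closed disk $\{|w|\le (1+\gamma)/2\}$, which contains $\{|w|<\gamma\}$. Multiplying through by $\rho(z,z_0)$ then delivers the lemma.

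The argument has no serious obstacle; the point that most deserves attention is the $z_0$-independence of the constant, which is automatic because the auxiliary circle sits strictly inside $\D$ at a distance depending only on $\gamma$, so the two boundary-type factors are uniform in $z_0$. Note also that the proof uses nothing about $g$ beyond the single vanishing condition $g(z_0)=0$ and its membership in $H^\infty_\alpha$, which is exactly why the sharp dependence $(1-|z_0|^2)^{-\alpha}$ appears on the right-hand side.
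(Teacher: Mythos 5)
Your argument is correct. Note that the paper itself offers no proof of this lemma---it is simply quoted as Lemma 1 of the reference \cite{Gr}---and your proof (pass to $w=\sigma_{z_0}(z)$, divide out the zero at $w=0$, bound $F$ on the circle $|w|=(1+\gamma)/2$ using $1-|\sigma_{z_0}(w)|^2=(1-|z_0|^2)(1-|w|^2)/|1-\overline{z_0}w|^2$, and apply the maximum modulus principle) is exactly the standard Schwarz-lemma argument behind that result, with the uniformity of the constant in $z_0$ correctly accounted for since the auxiliary circle lies at distance from $\T$ depending only on $\gamma$.
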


Proposition \ref{4main1}  below   is of interest only if $H^\infty \nsubseteq X$.  Its  proof is based on a theoretical abstraction from a  well-known method in the filed of second order complex differential equations  with  prescribed zero sequences  (cf. \cite[p. 47]{H2} or \cite[pp. 301-302]{Gr}).

\begin{prop} \label{4main1}
Let $X$ be a vector space of analytic functions in $\D$  satisfying  conditions (a), (b) and (c) below.
\begin{enumerate}
  \item [(a)] Suppose  $B$ is a Blaschke product   associated with a sequence $\{a_k\}_{k=1}^\infty$ in $\D$  satisfying   $\inf_{m}\prod_{n\neq m} \rho(a_n, a_m)\geq \gamma$ for some
  $\gamma \in (0, 1)$ and $I$ is a Blaschke product  associated with a sequence $\{b_k\}_{k=1}^\infty$ in $\D$  such that  $\rho(a_k, b_k)\leq s$, $k=1, 2, \cdots$,  for some constant $s\in (0, \gamma/2)$.
  If   $B\in X$, then  $I$  also belongs to $X$.
  \item [(b)] If both  $B$ and $I$ are interpolating Blaschke products in $X$ and $c$ is a complex  constant, then the function $\exp(cBI)$ belongs to $X$.
  \item [(c)] If both $f$ and $g$ are in $H^\infty \cap X$, then $fg\in  H^\infty \cap X$.
\end{enumerate}
If $J$ is an interpolating Blaschke product associated with a sequence $\{z_k\}_{k=1}^\infty$ in $\D$ and $J\in X$, then there exists a function  $A$ analytic in $\mathbb{D}$ such that
$\sup_{z\in \D} (1-|z|^2)^2|A(z)|<\infty$ and the equation $f''+Af=0$ admits a nontrivial solution $f \in H^\infty \cap X$ whose zero-sequence is $\{z_k\}_{k=1}^\infty$.
\end{prop}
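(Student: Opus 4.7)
The plan is to take $f$ of the exponential form $f = B_1 \exp(c B_1 B_2)$, where $B_1 := J$ is the given interpolating Blaschke product, $c \in \mathbb{C}$ is a constant to be fixed, and $B_2$ is a suitable companion interpolating Blaschke product. The three structural hypotheses on $X$ are then tailored precisely to placing such a function inside $H^\infty \cap X$.

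First I would fix the uniform separation constant $\gamma$ of the zero sequence $\{z_k\}$ of $J$ and a number $\epsilon \in (0, \gamma/2)$. A key step is to choose $|c|$ sufficiently large and then invoke a standard construction from the literature on prescribed zero sequences (used, for example, by Heittokangas and by Gr\"ohn) that produces an interpolating Blaschke product $B_2$ whose zero sequence $\{w_k\}$ satisfies $\rho(z_k, w_k) \leq \epsilon$ and whose values at $\{z_k\}$ are prescribed by
$$B_2(z_k) = -\frac{B_1''(z_k)}{2c\,(B_1'(z_k))^2}, \qquad k = 1, 2, \ldots.$$
The target values lie in $\mathbb{D}$ once $|c|$ is large enough, since the interpolating character of $B_1$ gives $|B_1'(z_k)|(1-|z_k|^2) \thickapprox 1$ and $|B_1''(z_k)|(1-|z_k|^2)^2 \lesssim 1$. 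Hypothesis (a), applied with $B_1 \in X$, now yields $B_2 \in X$; hypothesis (b) upgrades this to $\exp(c B_1 B_2) \in X$; and since $|\exp(cB_1 B_2)| \leq e^{|c|}$ and $\|B_1\|_\infty \leq 1$, hypothesis (c) delivers $f \in H^\infty \cap X$.

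Next I would verify analyticity of $A := -f''/f$ together with the zero set of $f$. Writing $u = c B_1 B_2$, logarithmic differentiation gives
$$\frac{f''}{f} = \frac{B_1''}{B_1} + 2 u'\,\frac{B_1'}{B_1} + u'' + (u')^2.$$
A residue computation shows that at each $z_k$ the first two summands generate simple poles with combined residue $B_1''(z_k)/B_1'(z_k) + 2u'(z_k)$; since $u'(z_k) = c B_1'(z_k) B_2(z_k)$, the prescribed interpolation identity forces this residue to vanish, so $A$ is analytic in $\mathbb{D}$. Because $\exp(\cdot)$ never vanishes and $B_1$ has simple zeros precisely at $\{z_k\}$, the zero sequence of $f$ is exactly $\{z_k\}$ with simple zeros, and $f$ is a nontrivial solution of $f'' + Af = 0$ with the required zero set.

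The main obstacle is the growth estimate $\sup_{z \in \mathbb{D}} (1-|z|^2)^2 |A(z)| < \infty$. The terms $u''$ and $(u')^2$ are handled by iterated Schwarz--Pick: from $\|c B_1 B_2\|_\infty \leq |c|$ one gets $(1-|z|^2)|u'(z)| \lesssim |c|$ and $(1-|z|^2)^2|u''(z)| \lesssim |c|$. The delicate term is $B_1''/B_1 + 2u'(B_1'/B_1) = N/B_1$, where $N := B_1'' + 2 u' B_1'$ vanishes at every $z_k$. I would split the estimate across the pseudo-hyperbolic disks $\Delta(z_k, \epsilon)$ and their complement. On the complement, Lemma \ref{B away from 0} supplies a uniform lower bound for $|B_1|$, reducing matters to bounding $(1-|z|^2)^2 |N(z)|$, which follows from Schwarz--Pick estimates for $B_1', B_1''$ and $u'$. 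Inside each $\Delta(z_k, \epsilon)$, Lemma \ref{|B|} gives $|B_1(z)| \gtrsim \rho(z, z_k)$, while the vanishing of $N$ at $z_k$ produces, via a Cauchy-type estimate on a slightly larger pseudo-hyperbolic ball, $|N(z)| \lesssim \rho(z, z_k)/(1-|z_k|^2)^2$. Combining the four contributions yields the desired $O(1/(1-|z|^2)^2)$ bound on $|A|$, completing the proof.
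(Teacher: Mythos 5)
Your proposal is correct and follows essentially the same route as the paper's proof: you build the companion interpolating Blaschke product by Earl-type interpolation of the values $-J''(z_k)/(2(J'(z_k))^2)$ with zeros pseudo-hyperbolically close to $\{z_k\}$, set $f=J\exp(cJB_2)$, invoke hypotheses (a), (b), (c) exactly as the paper does, and prove the bound on $(1-|z|^2)^2|A(z)|$ by the same splitting, using Lemma \ref{B away from 0}, Lemma \ref{|B|}, and (in the guise of your Cauchy-type estimate) Lemma \ref{the growth}. The only cosmetic difference is that the paper verifies explicitly that the perturbed zero sequence remains uniformly separated, which you assert from the construction.
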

\begin{proof} Since $J$ is an interpolating Blaschke product associated with  $\{z_k\}_{k=1}^\infty$,  there is  a positive constant  $\gamma_1$ such that
$$
\inf_{n} \prod_{m \neq n}\left|\frac{z_m-z_n}{1-\overline{z_m}z_n}\right|=\inf_{n} (1-|z_n|^2)|J'(z_n)|\geq \gamma_1.
$$
Note that any bounded analytic function is a Bloch function. Then
$$
\sup_{z\in \D}(1-|z|^2)^2|J''(z)|<\infty.
$$
Hence,
$$
\sup_{n}\frac{|J''(z_n)|}{|J'(z_n)|^2}<\infty.
$$
Set
$$
\zeta_n=-\frac{J''(z_n)}{2(J'(z_n))^2}, \  \ \ \ n =1, 2, \cdots
$$
 Then $\{\zeta_n\}$ is a bounded sequence.
By Earl's constructive proof of Carleson's  interpolating theorem for $H^\infty$(see \cite{E}), there exists a complex constant $c$ depending only on $\gamma_1$ and
a Blaschke product
$$
J_1(z)= \prod_{k=1}^\infty \f{|\eta_k|}{\eta_k}\f{\eta_k-z}{1-\overline{\eta_k}z}
$$
such that
$$
c (\sup_k |\zeta_k|)  J_1(z_n)=\zeta_n, \ \ n =1, 2, \cdots.
$$
Moreover, the zeros $\{\eta_n\}$ of the Blaschke product $J_1(z)$ can be chosen to satisfy
$$
\rho(z_n, \eta_n)\leq \frac{\gamma_1}{3},  \ \ n=1, 2, \cdots.
$$
By   $J\in X$ and condition (a), we see  that $J_1\in X$.

If  $n\not=k$, then
\begin{eqnarray*}
\rho(\eta_n, \eta_k)&\geq& \rho(z_n, \eta_k)-\rho(\eta_n, z_n)\\
&\geq& \rho(z_n, z_k)-\rho(z_k, \eta_k)-\rho(\eta_n, z_n)\\
&\geq& \gamma_1-\frac{2}{3}\gamma_1.
\end{eqnarray*}
Thus $\{\eta_n\}$  is  separated. Since $\{z_k\}_{k=1}^\infty$  is uniformly separated,  $\sum_{n} (1-|z_n|^2) \delta_{z_n}$ is a 1-Carleson measure, namely,
$$
\sup_{a\in \D} \sum_n \frac{(1-|z_n|^2)(1-|a|^2)}{|1-\overline{a}z_n|^2}<\infty.
$$
Note that  $\rho(z_n, \eta_n)\leq \gamma_1/3$ for all $n$. It is known  (cf.  \cite[p. 69]{Zhu} and \cite[Lemma 4.30]{Zhu})   that
$$
1-|z_n|^2\thickapprox 1-|\eta_n|^2, \ \ |1-\overline{a}z_n|\thickapprox |1-\overline{a}\eta_n|,
$$
for all $n$ and $a\in \D$. Consequently,
$$
\sup_{a\in \D} \sum_n \frac{(1-|\eta_n|^2)(1-|a|^2)}{|1-\overline{a}\eta_n|^2}<\infty.
$$
Then  $\{\eta_n\}_{n=1}^\infty$  is also uniformly separated and hence $J_1$ is also an interpolating Blaschke product.

Set $f=Je^h$, where $h=c (\sup_k |\zeta_k|) J_1 J$. Then $f$ satisfies the equation $f''+Af=0$, where  $A\in H(\D)$ and
$$
A=-\frac{f''}{f}=-\frac{J''+2J'h'}{J}-(h')^2-h''.
$$
Condition (b) yields that $e^h\in X$. Clearly, $e^h\in H^\infty$.
We get $f\in H^\infty \cap X$ from condition (c). It is also clear that the zero set of $f$ is $\{z_k\}_{k=1}^\infty$.

Since $J$ and $h$ are in $H^\infty$ which is a subset of $\B$,  $(1-|z|^2)^2|h'(z)|^2$ and $(1-|z|^2)^2|h''(z)|$ are uniformly bounded on $\D$.
Write
$$
\Omega=\D \setminus \left( \bigcup_{n=1}^\infty \Delta(z_{n}, \frac{\gamma_1}{4}) \right).
$$
By Lemma \ref{B away from 0},
$$
\sup_{z\in \Omega} (1-|z|^2)^2 \left|\frac{J''(z)+2J'(z)h'(z)}{J(z)}\right|<\infty.
$$
Note that $J''(z_n)+2J'(z_n)h'(z_n)=0$  for all $n$. It follows from  Lemma \ref{|B|} and Lemma \ref{the growth} that
$$
\sup_{z\in \D \setminus \Omega} (1-|z|^2)^2 \left|\frac{J''(z)+2J'(z)h'(z)}{J(z)}\right|<\infty.
$$
Hence $\sup_{z\in \D} (1-|z|^2)^2|A(z)|<\infty$. We finish the proof.
\end{proof}

\vspace{0.1truecm}
\noindent {\bf  Remark 1.}\ \  It is easy to see that    (a) and (b) in Proposition \ref{4main1} can be replaced by (c) and (d) below simultaneously.
\begin{enumerate}
  \item [(c)] If an interpolating Blaschke product  $B$ associated with a sequence $\{a_k\}_{k=1}^\infty$ belongs  to $X$, then the Blaschke product
  $I$ associated with a sequence $\{b_k\}_{k=1}^\infty$ also belongs to $X$, where $\rho(a_k, b_k)<s$, $k=1, 2, \cdots$,  for some constant $s\in (0, 1)$.
  \item [(d)] Suppose   $B$ is an interpolating Blaschke product in $X$,  $I$ is a Carleson-Newman Blaschke product  in $X$ and $c$ is a complex  constant. Then the function $\exp(cBI)$ belongs to $X$.
\end{enumerate}

\vspace{0.1truecm}
\noindent {\bf  Remark 2.}\ \  For $X\subseteq H(\D)$, denote by $M(X)$ the set of  multipliers on $X$; that is,
$$
M(X)= \{f\in X:\ \ fg\in X \  \text{for all}\  g\in X \}.
$$
From \cite[Lemma 11]{DRS}, if $X$ is a Banach space of analytic functions on which point evaluations are  bounded, then $M(X)\subseteq H^\infty$.
Replacing $X$ in Proposition \ref{4main1} by $M(X)$, we get a corresponding result on $M(X)$ immediately.

\vspace{0.1truecm}
\noindent {\bf  Remark 3.}\ \
 For a special  space $X$ satisfying  assumptions in Proposition \ref{4main1},  it is interesting  to consider further the pointwise growth condition or integrated growth condition of  the function $A$  in Proposition \ref{4main1}.

In 2019  J. Gr\"ohn \cite{Gr} gave several  interesting  results for solutions of (\ref{41}) having prescribed  zeros in $\D$. In particular, he obtained the following result.

\begin{otherth}\label{JGranne}
Let $0<s\leq 1$. If $\Lambda\subset \mathbb{D}$ is a separated sequence such that $\sum_{z_n \in \Lambda} (1-|z_n|)^s \delta_{z_n}$ is an $s$-Carleson measure, then there exists a function  $A$ analytic in $\mathbb{D}$ such that $|A(z)|^2(1-|z|^2)^{2+s}dm(z)$ is  an $s$-Carleson measure and the equation $f''+Af=0$ admits a nontrivial solution $f \in \Q_s \cap H^\infty$ whose zero-sequence is $\Lambda$.
\end{otherth}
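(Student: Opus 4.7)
The plan is to deduce the result from Proposition \ref{4main1} specialized to $X = \Q_s = F(2, 0, s)$, and then to promote the pointwise growth of $A$ produced there to the asserted Carleson measure bound on $|A(z)|^2(1-|z|^2)^{2+s}\,dm(z)$.

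The first stage is to verify the hypotheses of Proposition \ref{4main1} for $X = \Q_s$ together with the Blaschke product $J$ whose zero sequence is $\Lambda$. Since $\Lambda$ is separated and $\sum(1-|z_n|)^s\delta_{z_n}$ is an $s$-Carleson measure, $\Lambda$ is uniformly separated (for $0<s<1$ this is recorded in the remarks preceding Theorem \ref{2main}; for $s=1$ it follows directly from Garnett's characterization of uniformly separated sequences recalled in the introduction). Theorem \ref{1F-Inner} with $p=2$ therefore places $J$ in $F(2,0,s)=\Q_s$. Condition (a) of Proposition \ref{4main1} is inherited by $\Q_s$ because pseudo-hyperbolic perturbation of zeros preserves the $s$-Carleson property via the comparabilities $1-|a_k|\thickapprox 1-|b_k|$ and $|1-\overline a a_k|\thickapprox |1-\overline a b_k|$, combined with Theorem \ref{1F-Inner}; condition (b) follows from the Leibniz bound $|(\exp(cBI))'|\leq |c|e^{|c|}(|B'|+|I'|)$, which dominates the $\Q_s$-seminorm of $\exp(cBI)$ by $\|B\|_{\Q_s}$ and $\|I\|_{\Q_s}$; condition (c) is a routine Leibniz estimate showing $\Q_s\cap H^\infty$ is an algebra. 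Proposition \ref{4main1} then produces a nontrivial $f = J e^h \in \Q_s \cap H^\infty$ with zero set exactly $\Lambda$, satisfying $f''+Af = 0$ with
\[
A = -\frac{J''+2J'h'}{J} - (h')^2 - h'',
\]
where $h = c(\sup_k|\zeta_k|)J_1 J$ for an interpolating Blaschke product $J_1$ chosen by Earl's interpolation procedure, together with the pointwise bound $\sup_{z\in\D}(1-|z|^2)^2|A(z)|<\infty$.

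The principal technical stage is upgrading this pointwise control to the statement that $|A|^2(1-|z|^2)^{2+s}\,dm$ is an $s$-Carleson measure. Split $\D$ into $\Omega = \D \setminus \bigcup_n \Delta(z_n,\gamma/4)$ and its complement. On $\Omega$, Lemma \ref{B away from 0} gives $|J(z)|\gtrsim 1$, so it suffices to show that each of $|J''|^2(1-|z|^2)^{2+s}$, $|J'h'|^2(1-|z|^2)^{2+s}$, $|h''|^2(1-|z|^2)^{2+s}$, and $|h'|^4(1-|z|^2)^{2+s}$ defines an $s$-Carleson measure. Since $J, J_1 \in \Q_s\cap H^\infty$, the standard area characterization of $\Q_s$ through the first and second derivatives (together with $|h'|\lesssim (1-|z|^2)^{-1}$ used to trade extra factors of $h'$) converts each into a finite $\Q_s$-seminorm of $J$, $J_1$, or $h$, so the $\Omega$-contribution is immediate. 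On each disk $\Delta(z_n,\gamma/4)$, the cancellation $J''(z_n)+2J'(z_n)h'(z_n)=0$ is built into the choices $\zeta_n = -J''(z_n)/(2J'(z_n)^2)$ and $c(\sup_k|\zeta_k|)J_1(z_n)=\zeta_n$; since $J''+2J'h' \in H^\infty_2$ vanishes at $z_n$, Lemma \ref{the growth} yields $|J''(z)+2J'(z)h'(z)|\lesssim \rho(z,z_n)/(1-|z_n|)^2$, while Lemma \ref{|B|} gives $|J(z)|\gtrsim \rho(z,z_n)$. Consequently $(1-|z|^2)^2 |(J''+2J'h')/J|$ is uniformly bounded on $\D\setminus\Omega$, and since every disk has area $\thickapprox (1-|z_n|)^2$ and $\sum(1-|z_n|)^s\delta_{z_n}$ is $s$-Carleson, the weighted integral of $|A|^2(1-|z|^2)^{2+s}$ over any $S(I)\cap(\D\setminus\Omega)$ is controlled by a constant times $\sum_{z_n\in S(I)}(1-|z_n|)^s\lesssim |I|^s$.

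The main obstacle is this cancellation near the zeros: the factor $1/J$ in the formula for $A$ has a simple pole at every $z_n$, and only the precise vanishing $J''(z_n)+2J'(z_n)h'(z_n)=0$ — engineered by Earl's interpolation theorem — can absorb it. Quantifying this cancellation uniformly in $n$ so that the resulting bound survives integration against the $s$-Carleson weight is the delicate heart of the argument.
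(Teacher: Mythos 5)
The paper itself offers no proof of Theorem~\ref{JGranne}: it is imported verbatim from Gr\"ohn \cite{Gr}, and the closest in-house argument is the proof of Theorem~\ref{Bao-Fp1}, which treats only $0<s<1$ and delegates the Carleson-measure estimate for $A$ to ``repeating the arguments of'' \cite{Ye}. Your proposal is essentially that route specialized to $X=\Q_s=F(2,0,s)$, i.e.\ $p=q=2$: verify (a)--(c) of Proposition~\ref{4main1} via Theorem~\ref{1F-Inner} and Leibniz-type estimates, take $f=Je^{h}$ with the Earl-constructed $J_1$, and then upgrade the pointwise bound on $A$ by splitting $\D$ into $\Omega$ and the disks $\Delta(z_n,\gamma/4)$. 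What you add relative to the paper is an explicit sketch of the step the paper outsources: the reduction of the $\Omega$-part to seminorms of $J$ and $h=CJ_1J$ (trading factors of $h'$ against $(1-|z|^2)^{-1}$), and the local estimate near the zeros combining the cancellation $J''(z_n)+2J'(z_n)h'(z_n)=0$ with Lemmas~\ref{B away from 0}, \ref{|B|} and \ref{the growth}, followed by $\int_{\Delta(z_n,\gamma/4)}(1-|z|^2)^{s-2}dm\thickapprox(1-|z_n|)^{s}$ and summation against the $s$-Carleson hypothesis (to be precise one should sum over a dilated Carleson box, or use \eqref{SCM}, to absorb disks meeting $S(I)$ whose centers lie outside it). This is correct and is, in substance, the Gr\"ohn--Ye computation.

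Two points need patching. First, Theorem~\ref{1F-Inner} is stated only for $0<s<1$, so it cannot be invoked at $s=1$ as written; there, however, everything you need from it is trivial, since $\Q_1=BMOA\supseteq H^\infty$: every Blaschke product lies in $X$, $\exp(cBI)\in H^\infty\subseteq X$, and $H^\infty\cap\Q_1=H^\infty$ is an algebra, so (a)--(c) of Proposition~\ref{4main1} hold without any zero-distribution argument, and your measure estimate goes through verbatim with $s=1$. Second, your bounds for the $|J''|^2(1-|z|^2)^{2+s}$ and $|h''|^2(1-|z|^2)^{2+s}$ terms rest on the second-order derivative characterization of $\Q_s$ (equivalently of $BMOA$ when $s=1$); this is standard (see \cite{Xi1}) but is nowhere in the present paper, which is exactly the content it hides inside the citation of \cite{Ye}, so it should be stated and referenced explicitly. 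Finally, on $\D\setminus\Omega$ the contributions of $(h')^2$ and $h''$ must also be accounted for, but your estimates for these terms are in fact global, so this is only a matter of phrasing, not a gap.
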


For $0<p_1<p_2<\infty$ and $0<s\leq 1$ with $p_1+s>1$, it is known that $F(p_1, p_1-2, s)\subsetneqq  F(p_2, p_2-2, s)$. For $s>1$, all nontrivial $F(p, p-2, s)$ spaces are equal to the Bloch space.
Suppose $0<p<\infty$ and $0<s<\infty$ with $2p+s>1$. For $A\in H(\D)$,  $|A(z)|^p(1-|z|^2)^{2p-2+s}dm(z)$ is  an $s$-Carleson measure if and only if
$$
\sup_{w\in \D}\ind |A(z)|^p(1-|z|^2)^{2p-2} (1-|\sigma_w(z)|^2)^s dm(z)<\infty.
$$
Denote by $N_{p, s}$ the space of analytic functions $A$ satisfying the formula above. Clearly, if $0<p<\infty$ and $0<s<\infty$ with $2p+s\leq 1$, then $N_{p, s}$ contains only constant functions.
For $0<p<\infty$ and $s>1$, $N_{p, s}$ is equal to the Bloch type space $\B^3$ consisting of functions $f\in H(\D)$ with
$$
\sup_{z\in \D}(1-|z|^2)^3|f'(z)|<\infty.
$$
From \cite[Remark 3]{Ye}, if $0<p_1<p_2<\infty$ and $0<s\leq 1$ with $2p_1+s>1$, then $N_{p_1, s}\subsetneqq N_{p_2, s}$.
Thus the following theorem from \cite{Ye}  is a proper generalization of  the case of $0<s<1$ in Theorem \ref{JGranne}.

\begin{otherth} \label{Ye}
Let $0<s<1<p<\infty$ and $1<q<\infty$.  If $\Lambda\subset \mathbb{D}$ is a separated sequence such that $\sum_{z_n \in \Lambda} (1-|z_n|)^s \delta_{z_n}$ is an $s$-Carleson measure, then there exists a function  $A$ analytic in $\mathbb{D}$ such that $|A(z)|^q(1-|z|^2)^{2q-2+s}dm(z)$ is  an $s$-Carleson measure and the equation $f''+Af=0$ admits a nontrivial solution $f \in F(p, p-2, s) \cap H^\infty$ whose zero-sequence is $\Lambda$.
\end{otherth}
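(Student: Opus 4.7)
The natural strategy is to apply Proposition~\ref{4main1} with $X=F(p,p-2,s)$ to produce $f$ and $A=-f''/f$, and then to upgrade the resulting pointwise bound $\sup_{z\in\D}(1-|z|^2)^2|A(z)|<\infty$ to the required $s$-Carleson estimate by exploiting the explicit construction of $f$.

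First I would observe that $\Lambda$ is in fact uniformly separated. Indeed, for any arc $I\subset \T$ and any $z_n\in S(I)$ one has $1-|z_n|\lesssim |I|$, so
$$
\sum_{z_n\in S(I)}(1-|z_n|)\leq |I|^{1-s}\sum_{z_n\in S(I)}(1-|z_n|)^s\lesssim |I|,
$$
and separation combined with this $1$-Carleson condition forces $\Lambda$ to be uniformly separated. Let $J$ be the associated interpolating Blaschke product; by Theorem~\ref{1F-Inner}, $J\in F(r,r-2,s)$ for every $r>\max\{s,1-s\}$, in particular $J\in F(p,p-2,s)\cap F(q,q-2,s)\cap F(2q,2q-2,s)$. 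Next I would verify the three hypotheses of Proposition~\ref{4main1} for $X=F(p,p-2,s)$: the algebra property of $F(p,p-2,s)\cap H^\infty$ follows from the Leibniz rule; the perturbation property follows from Theorem~\ref{1F-Inner}, since bounded pseudo-hyperbolic proximity preserves both the Blaschke and the $s$-Carleson conditions on the zero sequence; and the exponential property follows from $|(e^{cBI})'|\leq |c|e^{|c|}|(BI)'|$ combined with $BI\in X\cap H^\infty$ from the algebra property. Proposition~\ref{4main1} then delivers $f=Je^{h}\in F(p,p-2,s)\cap H^\infty$ with zero set $\Lambda$ and an analytic $A=-f''/f$ with $\sup_{z\in\D}(1-|z|^2)^2|A(z)|<\infty$; here $h=c_0 J_1 J$ with $J_1$ an interpolating Blaschke product whose zeros $\{\eta_n\}$ satisfy $\rho(z_n,\eta_n)\leq \gamma_1/3$, so $\{\eta_n\}$ inherits the $s$-Carleson condition and $J_1$ lies in the same $F$-spaces as $J$.

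The core of the proof is then the $s$-Carleson estimate on $|A|^q(1-|z|^2)^{2q-2+s}dm$. Using the identity
$$
A=-\frac{J''+2J'h'}{J}-(h')^2-h'',
$$
I would split $\D=\Omega\cup \bigcup_{n=1}^\infty \Delta(z_n,\gamma_1/4)$, where $\Omega=\D\setminus\bigcup_{n=1}^\infty \Delta(z_n,\gamma_1/4)$, and bound $\int_{S(I)}|A|^q(1-|z|^2)^{2q-2+s}dm(z)$ on each piece. On $\Delta(z_n,\gamma_1/4)$ the pointwise bound combined with $(1-|z|^2)\thickapprox 1-|z_n|$ and area comparable with $(1-|z_n|)^2$ yields a contribution of order $(1-|z_n|)^s$; summation over $z_n$ in a slight enlargement of $S(I)$ then produces the required $|I|^s$ by the hypothesis on $\sum_n(1-|z_n|)^s\delta_{z_n}$. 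On $\Omega$, Lemma~\ref{B away from 0} gives $|J|\geq \gamma_0$, so $|A|^q\lesssim |J''|^q+|J'|^q|h'|^q+|h'|^{2q}+|h''|^q$; applying the Bloch estimate $|h'|\lesssim (1-|z|^2)^{-1}$ to the mixed term reduces matters to verifying that the four measures
$$
|J''|^q(1-|z|^2)^{2q-2+s}dm,\ \ |J'|^q(1-|z|^2)^{q-2+s}dm,\ \ |h'|^{2q}(1-|z|^2)^{2q-2+s}dm,\ \ |h''|^q(1-|z|^2)^{2q-2+s}dm
$$
are all $s$-Carleson. Via (\ref{SCM}) and the standard second-derivative characterization of $F(r,r-2,s)$ norms, these reduce respectively to $J\in F(q,q-2,s)$, $J\in F(q,q-2,s)$, $h\in F(2q,2q-2,s)$ and $h\in F(q,q-2,s)$, each of which follows from Theorem~\ref{1F-Inner} applied to the Blaschke products $J$ and $J_1 J$, whose zero sequences are $s$-Carleson.

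The main obstacle is the estimate on $\Omega$: the crude pointwise bound $(1-|z|^2)^2|A|\leq M$ is useless there, since $(1-|z|^2)^{s-2}dm$ fails to be $s$-Carleson when $s<1$. The rescue is precisely to abandon the pointwise estimate in favor of the $F$-space memberships of the building blocks $J$ and $h=c_0J_1 J$, which is exactly what Theorem~\ref{1F-Inner} makes available. Care must also be taken so that the Bloch estimate on $h'$ does not absorb too many powers of $(1-|z|^2)$, and that the second-derivative characterization of $F(q,q-2,s)$ is invoked to place $|J''|^q$ and $|h''|^q$ against the correct weight $(1-|z|^2)^{2q-2}$.
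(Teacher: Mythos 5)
Your proposal is correct, but note that the paper itself does not prove Theorem~\ref{Ye}: it is quoted from \cite{Ye}, and the paper's own result in this direction, Theorem~\ref{Bao-Fp1}, is proved by exactly the skeleton you use --- verify hypotheses (a)--(c) of Proposition~\ref{4main1} for $X=F(p,p-2,s)$ via Theorem~\ref{1F-Inner} and take $f=Je^{h}$ with $h=c_0J_1J$ --- while the crucial $s$-Carleson estimate for $|A(z)|^q(1-|z|^2)^{2q-2+s}dm(z)$ is delegated there to ``repeating the arguments of the proof of Theorem 1.1 in \cite{Ye}''. What you add is a self-contained version of that estimate, and it is sound: on the disks $\Delta(z_n,\gamma_1/4)$ the global bound $\sup_{z\in\D}(1-|z|^2)^2|A(z)|<\infty$ together with $\int_{\Delta(z_n,\gamma_1/4)}(1-|z|^2)^{s-2}dm(z)\thickapprox(1-|z_n|)^s$ and the $s$-Carleson hypothesis on the zeros (placed in a dilated Carleson box) gives the required $|I|^s$; on the complement, Lemma~\ref{B away from 0} reduces everything to the memberships $J\in F(q,q-2,s)$ and $J_1J\in F(q,q-2,s)\cap F(2q,2q-2,s)$, which Theorem~\ref{1F-Inner} supplies because the perturbed zeros $\{\eta_n\}$ inherit the $s$-Carleson condition. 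Two small remarks: the higher-order-derivative characterization of $F(q,q-2,s)$ that you invoke for the terms $|J''|^q$ and $|h''|^q$ is not stated in this paper and should be cited (it is standard, valid here since $q-2+s>-1$; see \cite{Rat} or \cite{Zhao2}); and you could streamline the $(h')^2$ term by applying the Bloch estimate to one of its two factors, so that only $h\in F(q,q-2,s)$, rather than $F(2q,2q-2,s)$, is needed.
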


Theorems \ref{Bao-Fp1}  below  is a  sequel to Theorem \ref{JGranne} and Theorem  \ref{Ye}.  Form the explanations after Theorem \ref{JGranne}, we see that Theorem  \ref{Bao-Fp1} below   strengthens Theorem \ref{Ye} and
the case of $0<s<1$ in Theorem \ref{JGranne}.

\begin{thm}\label{Bao-Fp1}
Suppose $0<s<1$,  $p>\max\{s, 1-s\}$ and  $q>\max\{s, 1-s\}$.
If $\Lambda\subset \mathbb{D}$ is a separated sequence such that $\sum_{z_n \in \Lambda} (1-|z_n|)^s \delta_{z_n}$ is an $s$-Carleson measure, then there  exists a function     $A$ in $H(\D)$ such that $|A(z)|^q(1-|z|^2)^{2q-2+s}dm(z)$
 is  an $s$-Carleson measure and the equation $f''+Af=0$ admits a nontrivial solution $f \in F(p, p-2, s) \cap H^\infty$ whose zero-sequence is $\Lambda$.
\end{thm}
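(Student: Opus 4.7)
The plan is to apply Proposition \ref{4main1} with $X=F(p,p-2,s)$ to construct the solution $f$ together with its coefficient $A$, and then to upgrade the pointwise bound $(1-|z|^2)^2|A(z)|\lesssim 1$ delivered by that proposition to the desired $s$-Carleson estimate by exploiting the $F(q,q-2,s)$ and $F(2q,2q-2,s)$ memberships of the Blaschke products involved.

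First I would observe that the hypotheses force $\Lambda$ to be uniformly separated: for any Carleson box $S(I)$,
\[
\sum_{z_n\in S(I)}(1-|z_n|)=\sum_{z_n\in S(I)}(1-|z_n|)^{1-s}(1-|z_n|)^s\lesssim |I|^{1-s}|I|^s=|I|,
\]
so $\sum_n(1-|z_n|)\delta_{z_n}$ is a $1$-Carleson measure, which combined with separation gives \eqref{uniformlyseparatedd}. Hence the Blaschke product $J$ with zeros $\Lambda$ is interpolating, and Theorem \ref{1F-Inner} gives $J\in F(p,p-2,s)$. I would then check that $X=F(p,p-2,s)$ satisfies conditions (a), (b), (c) of Proposition \ref{4main1}: (a) follows because $\rho(a_k,b_k)\le s<\gamma/2$ forces $1-|a_k|\thickapprox 1-|b_k|$, preserving the $s$-Carleson property of the zero sequence, whence Theorem \ref{1F-Inner} applies to $I$; (c) is the product-rule estimate $\|fg\|_{F(p,p-2,s)}^p\lesssim \|f\|_\infty^p\|g\|_{F(p,p-2,s)}^p+\|g\|_\infty^p\|f\|_{F(p,p-2,s)}^p$; and (b) follows from $(\exp(cBI))'=c(BI)'\exp(cBI)$ together with $|\exp(cBI)|\le e^{|c|}$ and $|(BI)'|^p\lesssim |B'|^p+|I'|^p$. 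Proposition \ref{4main1} then produces $f=Je^h$ with $h=c_0J_1J$ for a constant $c_0$ and an interpolating Blaschke product $J_1$ whose zeros $\{\eta_n\}$ satisfy $\rho(z_n,\eta_n)\le\gamma_1/3$, such that $f\in H^\infty\cap F(p,p-2,s)$, the zero set of $f$ equals $\Lambda$, and $(1-|z|^2)^2|A(z)|$ is uniformly bounded.

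To upgrade to the $s$-Carleson bound on $|A(z)|^q(1-|z|^2)^{2q-2+s}dm(z)$, I write
\[
A=G-(h')^2-h'',\qquad G=-\frac{J''+2J'h'}{J},
\]
and bound each of the three terms separately on $S(I)$. Expanding $h'=c_0(J_1'J+J_1J')$ and $h''=c_0(J_1''J+2J_1'J'+J_1J'')$, using $|J|,|J_1|\le 1$, and the elementary inequality $|J_1'J'|^q\lesssim |J_1'|^{2q}+|J'|^{2q}$, the contributions of $(h')^2$ and $h''$ reduce to showing that $|J'|^{2q}(1-|z|^2)^{2q-2+s}dm$ and $|J''|^q(1-|z|^2)^{2q-2+s}dm$ (together with their $J_1$ analogues) are $s$-Carleson. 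The former is the statement $J\in F(2q,2q-2,s)$, which follows from Theorem \ref{1F-Inner} since $2q>\max\{s,1-s\}$; the latter is the standard higher-derivative equivalent norm of $F(q,q-2,s)$ applied to $J\in F(q,q-2,s)$, which again holds by Theorem \ref{1F-Inner} since $q>\max\{s,1-s\}$. The $J_1$ analogues follow because $1-|\eta_n|\thickapprox 1-|z_n|$ forces $\sum(1-|\eta_n|)^s\delta_{\eta_n}$ to be $s$-Carleson as well.

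The main obstacle is the term $G$. I would split $\int_{S(I)}|G|^q(1-|z|^2)^{2q-2+s}dm$ over $\Omega=\D\setminus\bigcup_n\Delta(z_n,\gamma_1/4)$ and its complement. On $\Omega$, Lemma \ref{B away from 0} gives $|J|\ge\gamma_0>0$, so $|G|^q\lesssim |J''|^q+|J'|^{2q}+|J_1'|^{2q}$ and the previous Carleson estimates apply. On each $\Delta(z_n,\gamma_1/4)$ we exploit the central identity arranged by Earl's interpolation in the proof of Proposition \ref{4main1}, namely $J''(z_n)+2J'(z_n)h'(z_n)=0$. Since $J''+2J'h'\in H^\infty_2$, Lemma \ref{the growth} gives $|J''(z)+2J'(z)h'(z)|\lesssim\rho(z,z_n)(1-|z_n|^2)^{-2}$ on $\Delta(z_n,\gamma_1/4)$, while Lemma \ref{|B|} gives $|J(z)|\gtrsim\rho(z,z_n)$. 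Combining these, $|G(z)|\lesssim(1-|z|^2)^{-2}$ on that disk, so
\[
\int_{\Delta(z_n,\gamma_1/4)}|G|^q(1-|z|^2)^{2q-2+s}dm\lesssim(1-|z_n|^2)^s.
\]
Only those $z_n$ whose disk meets $S(I)$ contribute, and these lie in a bounded enlargement $S(\tilde I)$ of $S(I)$; the $s$-Carleson hypothesis on $\sum(1-|z_n|)^s\delta_{z_n}$ then bounds the sum by a constant times $|I|^s$, completing the estimate.
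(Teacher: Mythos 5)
Your proposal is correct and follows essentially the same route as the paper: verify hypotheses (a), (b), (c) of Proposition \ref{4main1} for $X=F(p,p-2,s)$ via Theorem \ref{1F-Inner}, apply the proposition to get $f=Je^{h}$ with $A=-(J''+2J'h')/J-(h')^2-h''$, and then establish the $s$-Carleson property of $|A|^q(1-|z|^2)^{2q-2+s}dm$. The only difference is that where the paper simply invokes the argument of Theorem 1.1 in \cite{Ye}, you write that argument out in full (the splitting into $\Omega$ and the disks $\Delta(z_n,\gamma_1/4)$, the identity $J''(z_n)+2J'(z_n)h'(z_n)=0$ combined with Lemmas \ref{B away from 0}, \ref{|B|} and \ref{the growth}, and the memberships $J,J_1\in F(2q,2q-2,s)$ together with the second-derivative description of $F(q,q-2,s)$), and this expansion is sound.
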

\begin{proof}

Note that $0<s<1$ and  $p>\max\{s, 1-s\}$. We first show that  (a), (b) and (c) in Proposition  \ref{4main1} hold by taking $X= F(p, p-2, s)$. Using Theorem \ref{1F-Inner} and checking the proof of Proposition \ref{4main1}, we get that (a) in Proposition  \ref{4main1} holds when $X= F(p, p-2, s)$.  Clearly, if $X=F(p, p-2, s)$, then  (c) in  Proposition \ref{4main1}  also holds.
Suppose $B$ and $I$ are interpolating Blaschke products in $F(p, p-2, s)$ and $c$ is a complex constant. Then
\begin{eqnarray*}
&~&\sup_{a\in \D} \ind \left|(e^{cB(z)I(z)})'\right|^p (1-|z|^2)^{p-2}(1-|\sigma_a(z)|^2)^sdA(z)\\
&\lesssim&  \|I\|^p_{F(p, p-2, s)} \sup_{z\in \D}\left|e^{cB(z)I(z)}\right|^p+  \|B\|^p_{F(p, p-2, s)} \sup_{z\in \D}\left|e^{cB(z)I(z)}\right|^p
< \infty,
\end{eqnarray*}
which gives  $e^{cBI}\in F(p, p-2, s)$. Thus (b) in  Proposition \ref{4main1} also  holds when  $X=F(p, p-2, s)$.

For  convenience, we write $\Lambda=\{z_n\}_{n=1}^\infty$. Denote by $J$ the Blaschke product  associated with the  sequence $\{z_n\}_{n=1}^\infty$.
Since $\{z_n\}$ is  separated and $\sum_{z_n \in \Lambda} (1-|z_n|)^s \delta_{z_n}$ is an $s$-Carleson measure, $J$ is an interpolating Blaschke product and it follows from Theorem \ref{1F-Inner} that $J\in F(p, p-2, s)$.
By Proposition \ref{4main1}, there exists a function  $A$ analytic in $\mathbb{D}$ such that the equation $f''+Af=0$ admits a nontrivial solution $f \in H^\infty \cap F(p, p-2, s)$ whose zero-sequence is $\{z_n\}_{n=1}^\infty$.
As shown in the proof of Proposition \ref{4main1},
$$
A=-\frac{J''+2J'h'}{J}-(h')^2-h'', \ \ h=C J_1 J,
$$
where $C$ is a complex constant and $J_1$ is a Blaschke product whose zero-sequence $\{\eta_n\}_{n=1}^\infty$ satisfies
$$
\rho(z_n, \eta_n)\leq \frac{\gamma}{3},  \ \ n=1, 2, \cdots
$$
for some $\gamma \in (0, 1)$.
For $q>\max\{s, 1-s\}$,   repeating  the arguments of the proof of Theorem 1.1 in \cite{Ye} (i.e. Theorem \ref{Ye} stated in this paper),  we see that $|A(z)|^q(1-|z|^2)^{2q-2+s}dm(z)$ is  an $s$-Carleson measure.
We finish the proof.
\end{proof}

Let $0<s<1$, $q>\max\{s, 1-s\}$ and $A\in H(\D)$. Suppose  $|A(z)|^q(1-|z|^2)^{2q-2+s}dm(z)$ is  an $s$-Carleson measure. By (\ref{SCM}) and the subharmonicity of $|A|^q$, we deduce
\begin{align*}
\infty &> \sup_{w\in \D}  \int_{\Delta (w, 1/2)}  \left(\frac{1-|w|^2}{|1-\overline{w}z|^2}\right)^s |A(z)|^q(1-|z|^2)^{2q-2+s}dm(z)\\
&\gtrsim \sup_{w\in \D} |A(w)|^q(1-|w|^2)^{2q},
\end{align*}
which implies  the condition of $A$ appeared in Proposition \ref{4main1}. In other words, the condition of $A$ in Theorem \ref{Bao-Fp1} is stronger than that in Proposition \ref{4main1}.

It is known (cf. \cite{PR, Zhao}) that $H^\infty \subseteq F(p, p-2, 1)$ when $2\leq p<\infty$, but  for $0<p<2$,   $H^\infty \nsubseteq F(p, p-2, 1)$
and $F(p, p-2, 1)\nsubseteq H^\infty$.  It is natural to consider  the case of $s=1$ in Theorem \ref{JGranne} via  replacing $\Q_1\cap H^\infty$ (i.e. $H^\infty$) by  $F(p, p-2, 1)\cap H^\infty$ for  $0<p<2$.  For this purpose, one should  characterize  interpolating Blaschke products  $B$ in $F(p, p-2, 1)$ for $0<p<2$ via the distribution of zeros of $B$. It is still  open  to find this characterization (cf. \cite[p.755]{PR}).

\vspace{0.1truecm}
\noindent {\bf Data Availability. }

All data generated or analyzed during this study are included in this article and in its bibliography.

\vspace{0.1truecm}
\noindent {\bf Conflict of Interest. }

The authors declared that they have no conflict of interest.

\end{document}